\documentclass[reqno,12pt]{amsart}

\usepackage[margin=2.5cm]{geometry}
\usepackage[T1]{fontenc}
\usepackage[utf8]{inputenc}
\usepackage[english]{babel}
\usepackage{amsmath,amssymb,amsthm,amsrefs}
\usepackage[normalem]{ulem}
\usepackage[adobe-utopia]{mathdesign}
\usepackage{enumitem}
\usepackage{xcolor}
\usepackage{comment}
\usepackage{tikz}
\usepackage{centernot}
\usepackage{mathtools}
\usepackage{hyperref}

\newcommand{\D}{\mathbb{D}}
\newcommand{\T}{\mathbb{T}}
\newcommand{\TT}{\mathcal{T}}
\newcommand{\norm}[1]{\left\lVert #1\right\rVert}
\newcommand{\one}{\emph{one-component}}
\newcommand{\Ret}{\mathcal{R}}
\DeclareMathOperator{\arctanh}{arctanh}
\numberwithin{equation}{section}
\newtheorem{theorem}{Theorem}[section]
\newtheorem{proposition}[theorem]{Proposition}
\newtheorem{lemma}[theorem]{Lemma}
\newtheorem{corollary}[theorem]{Corollary}
\theoremstyle{definition}
\newtheorem{definition}[theorem]{Definition}
\theoremstyle{remark}

\title[Proof of Baranov--Bessonov--Kapustin conjecture]
{A proof of the Baranov--Bessonov--Kapustin conjecture}
\date{\today}
\author{Carlo Bellavita}
\address{Dipartimento di Matematica ``F. Enriques'', Dipartimento di
Eccellenza MUR 2023--2027, Universit\`a degli Studi di Milano, Via
C.\ Saldini 50, 20133 Milano, Italy}
\email{carlo.bellavita@gmail.com}
\email{carlo.bellavita@unimi.it}

\author{Jani A. Virtanen}
\address{University of Eastern Finland, Joensuu, Finland. University of Reading, Reading, UK. University of Helsinki, Helsinki, Finland}
\email{jani.virtanen@uef.fi}

\thanks{
The first author is a member of the Gruppo Nazionale per l’Analisi Matematica,
la Probabilità e le loro Applicazioni (GNAMPA) of the Istituto Nazionale di Alta
Matematica (INdAM). The second author was supported in part by the Engineering and Physical Sciences Research Council grant EP/Y008375/1 and the Research Council of Finland grant 375631.}
\subjclass[2020]{Primary 47B35; Secondary 30J05, 46E22.}
\keywords{Truncated Toeplitz operator, model space, inner function, one-component inner function, weak factorization, Carleson measures}

\begin{document}
\begin{abstract}
We prove the conjecture of Baranov, Bessonov and Kapustin: \emph{for a
non-constant inner function $\theta$, every bounded truncated Toeplitz
operator on the model space $K_\theta$ has a bounded symbol if and only if
$\theta$ is one-component}.
\end{abstract}
\maketitle

\section{Introduction}

%\vspace{11 pt}
\noindent Let $\mathbb{D}$ be the open unit disk, let $\mathbb{T}=\partial\mathbb{D}$,
and let $m$ be normalized Lebesgue measure on $\mathbb{T}$. We identify
functions in the Hardy space $H^2$ with their almost everywhere defined
boundary values and use the inner product
\begin{equation*}
\langle f,g\rangle=\int_{\mathbb{T}} f\overline{g}\,dm.
\end{equation*}
For a non-constant inner function
$\theta$, consider the model space
\[
K_\theta=H^2\ominus\theta H^2
\]
\cite{Cima, GR, Rossmodel}, and its bounded part
\[
K_\theta^\infty=K_\theta\cap H^\infty .
\]
We denote by $P_\theta$ the orthogonal projection from $L^2(m)$ onto
$K_\theta$.

\vspace{11 pt}\noindent
Given $\varphi\in L^2(m)$, the \emph{truncated Toeplitz
operator} with symbol $\varphi$ is initially defined on the dense
subspace $K_\theta^\infty$ by
\[
A_\varphi^\theta x=P_\theta(\varphi x).
\]
We write $\TT(\theta)$ for the space of these operators that extend
boundedly to $K_\theta$. Sarason developed their systematic theory
in~\cite{sarason}. See also\cite{Chal-fri, Fricain} as excellent surveys. When $\theta(z)=z^n$, this construction gives precisely
the $n\times n$ Toeplitz matrices in the basis $1,z,\ldots,z^{n-1}$.
Clearly for $\varphi\in L^\infty(m)$, the operator $A^\theta_\varphi$ is bounded and
\begin{equation}\label{eq:contractive-symbol}
\norm{A_\varphi^\theta}\le\norm{\varphi}_\infty.
\end{equation}

\noindent
A distinctive feature of truncated Toeplitz operators is the
nonuniqueness of their symbols. The zero-symbol characterization
in~\cite[Theorem 3.1]{sarason} states that
\[
A_\varphi^\theta=0
\quad\Longleftrightarrow\quad
\varphi\in\theta H^2+\overline{\theta H^2},
\qquad \varphi\in L^2(m),
\]
so two $L^2$ symbols define the same operator precisely when their
difference belongs to this subspace.
Thus an unbounded symbol $\psi$ may represent an operator $A^\theta_\psi$ that also has a
bounded symbol. The question raised by Sarason is whether  \emph{every}
operator $A^\theta_\psi \in \TT(\theta)$ admits at least one such bounded representative $\varphi$~\cite{sarason}.
To express this question quantitatively, define
\begin{equation*}\label{eq:beta}
\beta_\theta(A)=\inf\bigl\{\norm{\varphi}_\infty:
 \varphi\in L^\infty(m),\ A_\varphi^\theta=A\bigr\},
\end{equation*}
with the convention $\inf\varnothing=+\infty$ and
{
\[
B_\theta = \sup\{ \beta_\theta(A) : A\in\mathcal T(\theta),\ \|A\|\le 1\}.
\]
}
The \emph{bounded symbol
property} is exactly the finiteness of $B_\theta$.
%on all of $\TT(\theta)$.

\vspace{11 pt}\noindent 
Some partial answers are classical. If $A_\varphi^\theta$ is bounded and
$\varphi\in H^2$, the commutant lifting theorem produces a symbol in
$H^\infty$~\cite[Section~4]{sarason}. Moreover
Rochberg~\cite{Rochberg} obtained for Wiener--Hopf operators on Paley--Wiener
spaces that every bounded truncated Toeplitz operator has a bounded symbol. In general,
however, the answer to the original question of Sarason is negative: in~\cite{Bar-Chal} the authors found inner functions
$\theta$ for which $\TT(\theta)$ contains operators, even of rank one, that
have no bounded symbol. 

\vspace{11 pt}\noindent
This leads to the natural problem of describing the
model spaces $K_\theta$ for which every bounded truncated Toeplitz operator
has a bounded symbol. As we shall see, the answer is expressed by the
geometry of the sublevel sets of $\theta$.

\begin{definition}
A non-constant inner function $\theta$ is \one{} if, for some $R\in(0,1)$,
the set $\{z\in\D:|\theta(z)|<R\}$ is nonempty and connected.
\end{definition}
\noindent This class originates in Cohn's work on Carleson measures for model
spaces~\cite{cohn}. Further characterizations were obtained by
Aleksandrov in his seminal articles about the embeddings of model spaces~\cite{Ale1, Ale2} and by several other authors in subsequent articles~\cite{Vol, B2005, Bess, Cim-Mort1, Rei, Nic-Rei, Bell-Nic, bell-mash, Bell-Dell}.
Baranov, Bessonov, and Kapustin proved that the
one-component condition implies the bounded symbol property and
conjectured that it is also necessary~\cite[Corollary 2.5 and
Conjecture 2.6]{bbk}. We will prove this necessity in this article. 

\begin{theorem}\label{thm:main}
Let $\theta$ be a non-constant inner function. The following conditions
are equivalent:
\begin{enumerate}[label=\textup{(\roman*)}]
\item Every operator in $\TT(\theta)$ has a symbol in $L^\infty(m)$.
\item The normalized rank-one operators have uniformly bounded symbol
\[
M_\theta=\sup_{\lambda\in\D}\beta_\theta(R_\lambda^\theta)<\infty.
\]
\item The function $\theta$ is one-component.
\end{enumerate}

\end{theorem}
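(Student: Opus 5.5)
The three conditions will be linked as (iii)$\Rightarrow$(i)$\Rightarrow$(ii)$\Rightarrow$(iii). Here $R_\lambda^\theta$ is read as the normalized rank-one operator $k_\lambda\otimes k_\lambda/\norm{k_\lambda}_2^2$, where $k_\lambda=k^\theta_\lambda$ is the reproducing kernel of $K_\theta$. It has norm one and belongs to $\TT(\theta)$. The implication (iii)$\Rightarrow$(i) is exactly the Baranov--Bessonov--Kapustin theorem~\cite[Corollary 2.5]{bbk}, so I will quote it.

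For (i)$\Rightarrow$(ii) I would use an open mapping argument. $\TT(\theta)$ is weakly closed in $\mathcal B(K_\theta)$ by Sarason's theory, so it is a Banach space. The map $L^\infty(m)\to\TT(\theta)$, $\varphi\mapsto A^\theta_\varphi$, is bounded by \eqref{eq:contractive-symbol}. Under (i) it is onto, so the induced map on $L^\infty/\ker$ is a bounded bijection between Banach spaces. The kernel is closed because it equals $L^\infty\cap(\theta H^2+\overline{\theta H^2})$. By the open mapping theorem the inverse is bounded, hence $B_\theta<\infty$. Since $\norm{R^\theta_\lambda}=1$, this gives $M_\theta\le B_\theta<\infty$.

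For (ii)$\Rightarrow$(iii), fix $\lambda$ and let $\varphi_\lambda\in L^\infty$ satisfy $A^\theta_{\varphi_\lambda}=R_\lambda^\theta$ with $\norm{\varphi_\lambda}_\infty\le 2M_\theta$. Pairing with $f,g\in K_\theta^\infty$ gives
\[
\frac{|f(\lambda)|\,|g(\lambda)|}{\norm{k_\lambda}_2^2}=\Bigl|\int_\T \varphi_\lambda f\overline{g}\,dm\Bigr|\le 2M_\theta\norm{f\overline g}_1 .
\]
In words, evaluation at $\lambda$ of products of two model-space functions is dominated by $2M_\theta\norm{k_\lambda}_2^2$ times their $L^1$ norm, uniformly in $\lambda$. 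The same bound extends to finite sums $\sum f_j\overline{g_j}$ measured in the projective norm. I would then argue by contraposition against Aleksandrov's criterion: $\theta$ is one-component if and only if $\sup_\lambda\norm{k_\lambda}_\infty/\norm{k_\lambda}_2^2<\infty$ (equivalently, the Carleson measures for $K_\theta$ are characterized by testing on kernels). Taking $f=k_\lambda$ only reduces the inequality to $|g(\lambda)|\le 2M_\theta\norm{k_\lambda g}_1$, so I will need genuinely different pairs $f,g$.

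The main obstacle is choosing test functions that contradict the displayed inequality when $\theta$ is not one-component. My plan is to use Cohn's geometry. If no sublevel set is connected, there are points $\lambda$ and $z$ with $|\theta(\lambda)|$ small, $|\theta(z)|$ close to one, and $z$ pseudo-hyperbolically close to $\lambda$. Around such configurations $\norm{k_\lambda}_\infty\gg\norm{k_\lambda}_2^2$, and the mass of $k_\lambda$ concentrates on a small boundary arc. I would take $f=k_z$ and $g=k_{z'}$ (or the conjugate kernel $\tilde k_z$) for suitable nearby points. I would then estimate $\norm{f\overline g}_1\le\norm{f}_2\norm{g}_2$ from above and compute $|f(\lambda)g(\lambda)|$ exactly from the kernel formula $k_w(\lambda)=(1-\overline{\theta(w)}\theta(\lambda))/(1-\overline w\lambda)$. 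The goal is to make the ratio blow up along a sequence, contradicting $M_\theta<\infty$. If the direct kernel choice fails, the fallback is duality: by Sarason/weak factorization, $\TT(\theta)$ is the dual of the projective tensor space generated by the products $f\overline g$. Then $\beta_\theta(R_\lambda^\theta)$ is comparable to the norm of the evaluation functional at $\lambda$ in the $L^1$-closure of those products. That norm can be compared with $\norm{k_\lambda}_\infty/\norm{k_\lambda}_2^2$, which Aleksandrov's criterion forces to be unbounded when $\theta$ is not one-component.
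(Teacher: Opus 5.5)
\textbf{Reading of $R_\lambda^\theta$.} First, a correction that is easy to repair. The operator is $R_\lambda^\theta=\widetilde k_\lambda^\theta\otimes k_\lambda^\theta/\norm{k_\lambda^\theta}_2^2$, not $k_\lambda\otimes k_\lambda/\norm{k_\lambda}_2^2$. The latter is in general not a truncated Toeplitz operator at all: for $\theta=z^2$, $\lambda=0$ it is the non-Toeplitz matrix $\mathrm{diag}(1,0)$ in the basis $1,z$. Under your reading, (ii) would fail for every $\theta$, and $M_\theta\le B_\theta$ would be meaningless.

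Pairing against $C_\theta y$ instead of $g$ fixes this. For $G=\sum_j x_jy_j$ with $x_j,y_j\in K_\theta^\infty$ it gives $|G(\lambda)|\le\beta_\theta(R_\lambda^\theta)\norm{k_\lambda}_2^2\norm{G}_1$, the analogue of \eqref{eq:bilinear}. The information lies in the $L^1$ norm of the sum, not the projective norm: with the projective norm the inequality holds with constant $1$ for every $\theta$. With this fix, your (i)$\Rightarrow$(ii) and (iii)$\Rightarrow$(i) are the paper's.

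\textbf{The gap: (ii)$\Rightarrow$(iii).} This implication carries the entire content of the conjecture, and you offer only a plan.

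Your primary test cannot work. Once you bound $\norm{f\overline g}_1\le\norm{f}_2\norm{g}_2$, the inequality you want to violate becomes $|f(\lambda)g(\lambda)|\le C\norm{k_\lambda}_2^2\norm f_2\norm g_2$. This holds with $C=1$ for every inner function by Cauchy--Schwarz, whichever kernels $k_z,k_{z'},\widetilde k_z$ you pick. A contradiction needs a test element whose $L^1$ norm is much smaller than its projective norm, and producing one is exactly the missing idea. Your geometric heuristic is also wrong: by Schwarz--Pick, points with $|\theta(\lambda)|$ small and $|\theta(z)|$ near $1$ are never pseudo-hyperbolically close.

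The fallback has the same hole. The Hahn--Banach formula $\beta_\theta(R_\lambda^\theta)=\norm{k_\lambda}_2^{-2}\sup\{|G(\lambda)|/\norm{G}_1\}$, over $G\in\operatorname{span}(K_\theta^\infty\cdot K_\theta^\infty)$, is correct. But the lower bound by $\norm{k_\lambda}_\infty$, which you need to invoke Aleksandrov's criterion, is asserted without argument. Combined with that criterion it would by itself be (ii)$\Rightarrow$(iii), so it cannot be taken for granted.

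\textbf{How the paper closes the gap.} It first makes the M\"obius change of variables $Uh=s_\lambda(h\circ\psi_\lambda)$, which gives $\beta_\theta(R_\lambda^\theta)=\beta_u(R_0^u)$ for $u=\theta\circ\psi_\lambda$. It then takes $f=k_0^u-\frac{\overline{u'(0)}}{1-|u(0)|^2}\widetilde k_0^u$. This satisfies $(C_uf)(0)=0$, hence $zf\in K_u$. So $(1-|u(0)|^2)f=k_0^uf+\overline{u(0)}\,\widetilde k_0^u\,(zf)$ is a sum of two products, while $\norm{f}_1\le\norm{f}_2=\sqrt{f(0)}$. This yields $1-\eta_\theta(\lambda)^2\le\beta_\theta(R_\lambda^\theta)^2(1-|\theta(\lambda)|^2)$, so (ii) forces $\eta_\theta>1/\sqrt2$ wherever $|\theta|$ is close to $1$. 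A separate geometric lemma then gives connected sublevel sets: the flow of $-\theta/\theta'$ retracts $\D$ onto $\{|\theta|\le s\}$. Your proposal contains neither this estimate nor any mechanism turning an estimate of this kind into information about the geometry of $\theta$.
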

We recall that the rank-one Toeplitz operator $R_\lambda^\theta$ is defined as
\[
R_\lambda^\theta=
\frac{\widetilde k_\lambda^\theta\otimes k_\lambda^\theta}
 {\|k^\theta_\lambda\|^2},
\]
where $k^\theta_\lambda$ is the reproducing kernel of $K_\theta$, $\widetilde k_\lambda^\theta$ is the conjugate kernel and $(v\otimes w)x=\langle x,w\rangle v$.

\vspace{11 pt}\noindent 
Truncated Toeplitz operators form a natural class of operators on model
spaces, and the Baranov--Bessonov-Kapustin conjecture\footnote{{
A word on terminology. We refer to this statement as the Baranov--Bessonov--Kapustin conjecture, following its explicit
formulation in \cite[p.~3444]{bbk}, where we first encountered it.
Several colleagues have pointed out that it may already have appeared in \cite{Bar-Chal}; we have not found an explicit statement there, but we cannot rule out an earlier formulation. The name is adopted solely for convenience and is not intended to attribute priority.} } studied here is of particular interest in view of
its several equivalent reformulations in \cite[Theorem 2.4]{bbk} and \cite[Theorem 3.2]{O'Loughlin}.

\vspace{11 pt}\noindent 
The rest of the paper is organized as follows. Section~\ref{Sect2} collects
some preliminary material. The key lemmas for the proof of
Theorem~\ref{thm:main} are established in Sections~\ref{Sect3}
and~\ref{Sect4}. The proof is completed in Section~\ref{Sect5}, together with some immediate corollaries. Section~\ref{Sect6} discusses the
consequences of Theorem~\ref{thm:main}.

\vspace{11 pt}
\section{Preliminaries: model spaces, rank-one symbols, and quotient norms}\label{Sect2}
\noindent 
In the model space $K_\theta$, it is defined the canonical conjugation~\cite{Gar-Put1, Gar-Put2}
\[
C_\theta h=\theta\bar z\overline h,\qquad h\in K_\theta.
\]
It satisfies
\begin{equation}\label{eq:conjugation}
C_\theta^2=I,
\qquad \langle C_\theta f,C_\theta g\rangle=\langle g,f\rangle,
\qquad |C_\theta f|=|f|\quad\text{a.e. on }\T.
\end{equation}
In particular, it preserves $K_\theta^\infty$.
A direct boundary calculation gives
\begin{equation*}\label{eq:conjugate-kernel}
C_\theta k_\lambda^\theta
=\theta\bar z\,
  \frac{1-\theta(\lambda)\bar\theta}{1-\lambda\bar z}
=\frac{\theta-\theta(\lambda)}{z-\lambda}
=\widetilde k_\lambda^\theta. 
\end{equation*}
Both kernels $k^\theta_\lambda$, $\widetilde k_\lambda^\theta$ belong to $K_\theta^\infty$ when $\lambda \in \D$.
First of all, we look for the bounded symbol of $R_\lambda^\theta$~\cite[Corollary 5.1]{sarason}.
\begin{lemma}\label{lem:rank-one}
For every $\lambda\in\D$, the function
\begin{equation}\label{eq:rank-one-symbol}
\varphi_\lambda(\zeta)=
\frac{\theta(\zeta)}{\|k_\lambda^\theta\|^2(\zeta-\lambda)},
\qquad \zeta\in\T,
\end{equation}
is a bounded symbol for
\[
R_\lambda^\theta=
\frac{\widetilde k_\lambda^\theta\otimes k_\lambda^\theta}
 {\|k^\theta_\lambda\|^2}.
\]
Moreover,
\begin{equation*}\label{eq:rank-one-norms}
\norm{R_\lambda^\theta}
=\norm{R_\lambda^\theta}_{S_1}=1,
\qquad
1\le\beta_\theta(R_\lambda^\theta)<\infty.
\end{equation*}
\end{lemma}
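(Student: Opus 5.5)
The plan is to verify directly that $A^\theta_{\varphi_\lambda}$ and $R^\theta_\lambda$ agree on $K_\theta^\infty$. After that, the norm statements follow from general facts about rank-one operators and from \eqref{eq:contractive-symbol}.

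First, boundedness. Since $\lambda\in\D$, we have $|\zeta-\lambda|\ge 1-|\lambda|>0$ on $\T$. Hence $\varphi_\lambda\in L^\infty(m)$, with $\norm{\varphi_\lambda}_\infty\le \bigl(\|k_\lambda^\theta\|^2(1-|\lambda|)\bigr)^{-1}$, and $\|k^\theta_\lambda\|^2=(1-|\theta(\lambda)|^2)/(1-|\lambda|^2)>0$ because $\theta$ is non-constant.

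For the symbol identity, fix $x\in K_\theta^\infty$ and decompose
\[
\frac{\theta x}{z-\lambda}=\frac{\theta(x-x(\lambda))}{z-\lambda}+x(\lambda)\frac{\theta-\theta(\lambda)}{z-\lambda}+x(\lambda)\frac{\theta(\lambda)}{z-\lambda}.
\]
The first term lies in $\theta H^2$, so $P_\theta$ kills it.

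The second term equals $x(\lambda)\widetilde k^\theta_\lambda$, which lies in $K_\theta$.

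For the third term, on $\T$ we have $1/(z-\lambda)=\bar z/(1-\lambda\bar z)$, which lies in $\overline{zH^2}=L^2\ominus H^2$. So its projection onto $H^2$, and hence onto $K_\theta$, vanishes.

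Therefore $P_\theta(\theta x/(z-\lambda))=x(\lambda)\widetilde k^\theta_\lambda=\langle x,k_\lambda^\theta\rangle \widetilde k^\theta_\lambda$. Dividing by $\|k^\theta_\lambda\|^2$ gives $A^\theta_{\varphi_\lambda}x=R^\theta_\lambda x$. Since $K^\infty_\theta$ is dense and $R^\theta_\lambda$ is bounded, the two operators coincide.

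For the norms, a rank-one operator $v\otimes w$ has operator norm and trace norm both equal to $\|v\|\|w\|$. By \eqref{eq:conjugation}, $\|\widetilde k^\theta_\lambda\|=\|C_\theta k^\theta_\lambda\|=\|k_\lambda^\theta\|$, so $\norm{R^\theta_\lambda}=\norm{R^\theta_\lambda}_{S_1}=1$.

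The bound $\beta_\theta(R^\theta_\lambda)<\infty$ follows from the first step. For the lower bound, any bounded symbol $\varphi$ of $R^\theta_\lambda$ satisfies $1=\norm{R^\theta_\lambda}=\norm{A^\theta_\varphi}\le\norm{\varphi}_\infty$ by \eqref{eq:contractive-symbol}. Taking the infimum over such $\varphi$ gives $\beta_\theta(R^\theta_\lambda)\ge1$.

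The only delicate point is the projection computation in the second step. I need to make sure that the three-term splitting is legitimate, in the sense that each piece lies in $L^2$ and the membership claims hold. This is secured by $x\in H^\infty$, together with the fact that $(x-x(\lambda))/(z-\lambda)\in H^2$, since the zero at $\lambda$ is removable.
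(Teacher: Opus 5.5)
Your proof is correct, but for the central identity you take a different route from the paper. The paper never computes $A^\theta_{\varphi_\lambda}x$ itself. Instead, for $x,y\in K_\theta^\infty$ it does the following:
\begin{itemize}
\item it tests against $C_\theta y$ and rewrites $\langle A^\theta_{\varphi_\lambda}x,C_\theta y\rangle$ as $\|k_\lambda^\theta\|^{-2}\int_\T \zeta x y/(\zeta-\lambda)\,dm$;
\item it evaluates this by the Cauchy formula as $x(\lambda)y(\lambda)/\|k_\lambda^\theta\|^2$;
\item it matches the result with $\langle R^\theta_\lambda x, C_\theta y\rangle$ via $\langle \widetilde k^\theta_\lambda, C_\theta y\rangle=y(\lambda)$;
\item it concludes using density of $K_\theta^\infty$ together with its invariance under $C_\theta$.
\end{itemize}

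You instead compute $P_\theta(\varphi_\lambda x)$ directly, splitting $\theta x/(z-\lambda)$ into a piece in $\theta H^2$, the piece $x(\lambda)\widetilde k^\theta_\lambda\in K_\theta$, and a piece in $\overline{zH^2}$. This buys several things:
\begin{itemize}
\item it is more elementary and needs no conjugation for the operator identity;
\item it works verbatim for every $x\in K_\theta$, since $(x-x(\lambda))/(z-\lambda)\in H^2$ for any $x\in H^2$;
\item it exhibits the range of the operator explicitly.
\end{itemize}

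What the paper's weak formulation buys is the bilinear identity $\int_\T \psi\, z\bar\theta\, xy\,dm = x(\lambda)y(\lambda)/\|k_\lambda^\theta\|^2$. This is exactly the form reused in Proposition~\ref{prop:center}, for an \emph{arbitrary} bounded symbol $\psi$ of $R_0^u$. From your operator identity it is recovered in one line by pairing with $C_\theta y$.

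The norm computation and the bounds on $\beta_\theta(R_\lambda^\theta)$ in your argument coincide with the paper's.
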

\begin{proof}
The bound $|\zeta-\lambda|\ge1-|\lambda|$ makes $\varphi_\lambda$
%\eqref{eq:rank-one-symbol}
a bounded function. For
$x,y\in K_\theta^\infty$, we have that 
\begin{align}
\langle A_{\varphi_\lambda}^\theta x,C_\theta y\rangle
&=\frac{1}{\|k_\lambda^\theta\|^2}
  \int_\T\frac{\zeta x(\zeta)y(\zeta)}{\zeta-\lambda}\,dm(\zeta)
  =\frac{x(\lambda)y(\lambda)}{\|k_\lambda^\theta\|^2}.
\label{eq:cauchy-rank-one}
\end{align}
On the other hand, \eqref{eq:conjugation} gives
\[
\langle\widetilde k_\lambda^\theta,C_\theta y\rangle
=\langle y,k_\lambda^\theta\rangle=y(\lambda),
\]
so the last expression in \eqref{eq:cauchy-rank-one} is exactly
$\langle R_\lambda^\theta x,C_\theta y\rangle$.
Density of $K_\theta^\infty$, together with its invariance under
$C_\theta$, identifies the two bounded operators.

A rank-one operator $v\otimes w$ has one nonzero singular value,
$\norm{v}_2\norm{w}_2$. Since $C_\theta$ is isometric, this value for
$R_\lambda^\theta$ is
$1$.
Finally, \eqref{eq:contractive-symbol} and the fact that $\|R^\theta_\lambda\|=1$ imply
$\beta_\theta(R_\lambda^\theta)\ge1$.

\end{proof}
\noindent The
following lemma shows that, whenever every bounded truncated Toeplitz
operator has a bounded symbol, {then $B_\theta$ is finite and } one can choose the symbol with norm
controlled by $\norm{A}$~\cite[Section 6]{bbk}.
\begin{lemma}\label{lem:quotient}
The space $\TT(\theta)$ is closed in operator norm. If every element of
$\TT(\theta)$ has a bounded symbol, then $B_\theta<\infty$
and
\begin{equation}\label{eq:uniform-symbol-bound}
\beta_\theta(A)\le B_\theta\norm{A}
\qquad(A\in\TT(\theta)).
\end{equation}
\end{lemma}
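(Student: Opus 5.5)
The plan has two parts: closedness of $\TT(\theta)$ by a weak-type characterization, and the uniform bound by a quotient-space argument with the open mapping theorem.

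\textbf{Closedness.} I use Sarason's intrinsic characterization of truncated Toeplitz operators~\cite{sarason}. A bounded operator $A$ on $K_\theta$ lies in $\TT(\theta)$ if and only if
\[
A - S_\theta A S_\theta^* = (u\otimes k_0^\theta) + (k_0^\theta\otimes v)
\]
for some $u,v\in K_\theta$, where $S_\theta$ is the compressed shift. Let $A_n\in\TT(\theta)$ with $A_n\to A$ in norm. Then $A_n - S_\theta A_n S_\theta^*$ converges in norm, and its range and co-range stay inside the fixed two-dimensional structure spanned by the pieces $u_n\otimes k_0^\theta$ and $k_0^\theta\otimes v_n$. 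After normalizing (for instance, fixing the value at $k_0^\theta$), the vectors $u_n$ and $v_n$ are obtained from $A_n k_0^\theta$ and $A_n^* k_0^\theta$ by bounded linear maps. Hence they converge, and the limit $A$ satisfies the same identity. So $A\in\TT(\theta)$, and consequently $\TT(\theta)$ is a Banach space in the operator norm.

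\textbf{Uniform bound.} Consider the bounded linear map
\[
\Phi:L^\infty(m)\to\TT(\theta),\qquad \Phi(\varphi)=A_\varphi^\theta .
\]
It is contractive by \eqref{eq:contractive-symbol}. By hypothesis every element of $\TT(\theta)$ has a bounded symbol, so $\Phi$ is surjective onto a Banach space. Its kernel $N$ is closed in $L^\infty(m)$, so $\Phi$ induces a continuous bijection
\[
\widehat\Phi:L^\infty(m)/N\to\TT(\theta).
\]
The quotient norm of the class of $\varphi$ is exactly $\beta_\theta(A_\varphi^\theta)$. By the open mapping theorem (or the inverse mapping theorem), $\widehat\Phi^{-1}$ is bounded. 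Setting $B=\|\widehat\Phi^{-1}\|$ gives $\beta_\theta(A)\le B\|A\|$ for all $A\in\TT(\theta)$. In particular $B_\theta\le B<\infty$. Inequality \eqref{eq:uniform-symbol-bound} with the constant $B_\theta$ then follows by homogeneity: apply the definition of $B_\theta$ to $A/\|A\|$, the case $A=0$ being trivial.

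\textbf{Main obstacle.} The delicate step is closedness. Specifically, one must check that the finite-rank defect vectors $u_n,v_n$ really converge and are not merely bounded, given the non-uniqueness of the decomposition. If this becomes awkward, I would instead use the alternative characterization via the conjugation: $A\in\TT(\theta)$ if and only if $C_\theta A C_\theta=A^*$ and $\langle A\,S_\theta x, S_\theta y\rangle=\langle Ax,y\rangle$ whenever $x,y\in K_\theta$ with $x(0)=y(0)=0$ in the appropriate sense. Every condition in that form is closed under norm (even weak operator) limits, which settles closedness directly.
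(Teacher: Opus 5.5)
Your argument is correct. Its second half (the contractive map $\varphi\mapsto A_\varphi^\theta$, its closed kernel, the bounded inverse theorem on the quotient, identifying the quotient norm with $\beta_\theta$, then homogeneity) is exactly the paper's.

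The routes differ only in the closedness step. The paper uses Sarason's Theorem~8.1, the quadratic-form criterion $\langle Af,f\rangle=\langle A(zf),zf\rangle$ for $f,zf\in K_\theta$. Each such condition is a single identity continuous in $A$, so closedness is immediate. You use the shift-defect criterion (Sarason's Theorem~4.1), which needs one extra fact: the set of operators $u\otimes k_0^\theta+k_0^\theta\otimes v$ is norm closed.

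Your normalization does supply this, but state it precisely. Write $D=A-S_\theta AS_\theta^*$. The non-uniqueness is exactly $(u,v)\mapsto(u+ck_0^\theta,\,v-\bar c\,k_0^\theta)$, $c\in\mathbb C$, so you may take $\langle v,k_0^\theta\rangle=0$. Then $u=Dk_0^\theta/\norm{k_0^\theta}^2$ and $v=\bigl(D^*k_0^\theta-\langle k_0^\theta,u\rangle k_0^\theta\bigr)/\norm{k_0^\theta}^2$. Thus $u_n,v_n$ are bounded linear functions of $D_nk_0^\theta$ and $D_n^*k_0^\theta$, not of $A_nk_0^\theta$ and $A_n^*k_0^\theta$ as you wrote. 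Nothing in this structure is two-dimensional.

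A cleaner formulation: $D$ has the required form if and only if $(I-P_0)D(I-P_0)=0$, where $P_0$ is the orthogonal projection onto $\mathbb C k_0^\theta$. This condition is visibly closed, even in the weak operator topology. So both routes give weak-operator closedness; the paper's is just shorter.

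Your fallback criterion is misstated. The right restriction is not $x(0)=y(0)=0$ but $zx,zy\in K_\theta$, equivalently $x,y\perp\widetilde k_0^\theta$; on that subspace $S_\theta$ acts as multiplication by $z$. The $C_\theta$-symmetry hypothesis is superfluous there. With $x(0)=0$ the criterion already fails for $A=I$: take $\theta=z^2$ and $x=y=z$, which gives $\langle S_\theta z,S_\theta z\rangle=0\neq1=\langle z,z\rangle$. Your main argument does not rely on it, so this is a caution rather than a gap.
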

\begin{proof}
Sarason's characterization~\cite[Theorem 8.1]{sarason} states
that a bounded operator $A$ on $K_\theta$ belongs to $\TT(\theta)$ if and
only if
\begin{equation*}\label{eq:sarason}
\langle Af,f\rangle=\langle A(zf),zf\rangle
\quad\text{whenever } f\in K_\theta\text{ and }zf\in K_\theta.
\end{equation*}
For each such $f$, this identity is preserved by operator-norm limits.
Thus $\TT(\theta)$ is a closed linear subspace of the bounded operators
on $K_\theta$, and is a Banach space.
The continuous linear map
\[
Q_\theta:L^\infty(m)\longrightarrow\TT(\theta),
\qquad Q_\theta\varphi=A_\varphi^\theta
\]
is contractive by \eqref{eq:contractive-symbol}. Its kernel
$N_\theta$ is closed, so the quotient
$E_\theta=L^\infty(m)/N_\theta$ is Banach. The induced map
\[
\widetilde Q_\theta:E_\theta\longrightarrow\TT(\theta)
\]
is continuous and injective. If every bounded truncated Toeplitz
operator has a bounded symbol, it is also surjective. The bounded inverse
theorem gives
\[
\norm{\widetilde Q_\theta^{-1}A}_{E_\theta}
\le B_\theta\norm{A}.
\]
If $\varphi$ is any bounded symbol of $A$, its entire symbol fiber is
$\varphi+N_\theta$. Therefore
\[
\norm{\widetilde Q_\theta^{-1}A}_{E_\theta}
=\inf_{\nu\in N_\theta}\norm{\varphi+\nu}_\infty
=\beta_\theta(A),
\]
which proves \eqref{eq:uniform-symbol-bound}.

\end{proof}

%\vspace{11 pt}
\section{The central estimate by orthogonal projection}\label{Sect3}

\noindent We start with a preliminary estimate.

\begin{proposition}\label{prop:center}
Let $u$ be a non-constant inner function.
Then
\begin{align}
1-\frac{|u'(0)|^2}{(1-|u(0)|^2)^2}&\le\beta_u(R_0^u)^2(1-|u(0)|^2),
\label{eq:center-l2}\\
1-\frac{|u'(0)|}{(1-|u(0)|^2)}&\le\beta_u(R_0^u)\norm{k^u_0}_1.
\label{eq:center-l1}
\end{align}
\end{proposition}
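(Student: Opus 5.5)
The plan is to turn the operator identity $A_\varphi^u=R_0^u$, for an arbitrary bounded symbol $\varphi$, into a single scalar identity. Then I bound that identity from above by $\norm{\varphi}_\infty$ times an $L^1$ (or $L^2$) norm of an explicit function in $K_u$. Taking the infimum over $\varphi$ gives $\beta_u(R_0^u)$.

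\textbf{Step 1: the scalar identity.} Write $k_0:=k_0^u=1-\overline{u(0)}u$ and $\widetilde k_0:=\widetilde k_0^u=\bar z(u-u(0))$. Then
\[
\norm{k_0}_2^2=\norm{\widetilde k_0}_2^2=1-|u(0)|^2,\qquad \widetilde k_0(0)=u'(0).
\]
Since $R_0^u k_0=\widetilde k_0$, every bounded symbol $\varphi$ of $R_0^u$ satisfies $P_u(\varphi k_0)=\widetilde k_0$. Equivalently,
\[
\int_\T \varphi\, k_0\,\overline w\,dm=\langle \widetilde k_0,w\rangle,\qquad w\in K_u^\infty .
\]
This is just \eqref{eq:cauchy-rank-one} with $x=k_0$ and $y=C_u w$.

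\textbf{Step 2: choice of test function.} Take $w$ to be the component of $\widetilde k_0$ orthogonal to $k_0$:
\[
w=\widetilde k_0-\frac{\widetilde k_0(0)}{\norm{k_0}_2^2}\,k_0=\widetilde k_0-\frac{u'(0)}{1-|u(0)|^2}\,k_0 .
\]
A direct Pythagorean computation gives
\[
\norm{w}_2^2=(1-|u(0)|^2)\Bigl(1-\frac{|u'(0)|^2}{(1-|u(0)|^2)^2}\Bigr),
\]
so the left-hand side of \eqref{eq:center-l2} is $\norm{w}_2^2/\norm{k_0}_2^2$. Since $\langle\widetilde k_0,w\rangle=\norm{w}_2^2$, Step 1 yields
\[
\norm{w}_2^2\le\norm{\varphi}_\infty\,\norm{k_0 w}_1 .
\]
The bound \eqref{eq:center-l1} should come out the same way. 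Instead of projecting in $L^2$, compare $\widetilde k_0$ with $\frac{u'(0)}{1-|u(0)|^2}k_0$ and use the reverse triangle inequality. This produces the factor $1-|u'(0)|/(1-|u(0)|^2)$. Then estimate the pairing by $\norm{\varphi}_\infty\norm{k_0}_1$, using $|\widetilde k_0|=|k_0|$ on $\T$, which holds because $\widetilde k_0=C_u k_0$ and \eqref{eq:conjugation} applies.

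\textbf{Step 3 (main obstacle): the exact weight.} The naive estimate $\norm{k_0w}_1\le\norm{k_0}_2\norm{w}_2$ only gives
\[
1-\frac{|u'(0)|^2}{(1-|u(0)|^2)^2}\le\beta_u(R_0^u)^2,
\]
which misses the factor $1-|u(0)|^2$ on the right of \eqref{eq:center-l2}. So the test functions have to be paired more cleverly.

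What I would try first is to exploit the splitting $1=k_0+\overline{u(0)}\,u$ and test $\varphi$ against $\overline{w}$ itself rather than $\overline{k_0 w}$. That gives an $L^2$ bound $|\langle\varphi,w\rangle|\le\norm{\varphi}_\infty\norm{w}_2\le \norm{\varphi}_\infty\norm{w}_2$ with no factor $k_0$. The term $\overline{u(0)}\langle\varphi u,w\rangle$ must then be handled through the symbol ambiguity $\varphi+N_u$ from Sarason's theorem. Terms with $u$ should be absorbable because $u\,\overline{w}\in \overline{\bar z K_u}$, and Sarason's zero-symbol theorem lets one subtract elements of $uH^2+\overline{uH^2}$ from $\varphi$ freely.

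If this bookkeeping works, $\norm{w}_2^2\le\beta\,\norm{k_0}_2^2\,\norm{w}_2$ follows, and squaring gives \eqref{eq:center-l2}. The $L^1$ analogue of the same bookkeeping gives \eqref{eq:center-l1}. Checking that the $u$-terms genuinely cancel, and not merely up to a constant, is the step I expect to require the most care.
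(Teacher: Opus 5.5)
Your test function is the right one: your $w$ is exactly $C_uf$ for the paper's $f$. Pairing $\varphi$ against $\overline{w}$ alone is also what the paper does, since $\int_\T\psi z\bar u f\,dm=\langle\psi,w\rangle$. The gap is that everything hinges on the exact value of $\langle\varphi,w\rangle$, and you leave that step open with a wrong expectation.

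The $u$-term neither cancels nor can be absorbed via $N_u$. For every bounded symbol $\varphi$ of $R_0^u$ one has $\langle\varphi u,w\rangle=u(0)\langle\varphi,w\rangle$. This quantity is the same for all symbols, so moving within the fiber $\varphi+N_u$ cannot change it, and it is generally nonzero. Your membership claim is also off: $u\overline{w}=z\,C_uw$ lies in $zK_u$, not in $\overline{\bar zK_u}$.

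Had the $u$-term vanished, you would get $\langle\varphi,w\rangle=\norm{w}_2^2$. That yields only $\norm{w}_2\le\beta_u(R_0^u)$, which is weaker than the required $\norm{w}_2\le\beta_u(R_0^u)\norm{k_0}_2^2$. So your stated conclusion does not follow from the mechanism you describe. The factor $\norm{k_0}_2^2=1-|u(0)|^2$ comes precisely from the surviving term.

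The missing idea is to split $1-|u(0)|^2=k_0+\overline{u(0)}\,(u-u(0))$ and to use $u-u(0)=z\widetilde k_0$. Since $w(0)=0$, the function $\bar zw$ lies in $K_u^\infty$, so
\[
\int_\T\varphi\,(u-u(0))\,\overline{w}\,dm=\langle R_0^u\widetilde k_0,\bar zw\rangle=\frac{u'(0)}{1-|u(0)|^2}\,\langle u-u(0),w\rangle=0 .
\]
Hence $(1-|u(0)|^2)\langle\varphi,w\rangle=\langle\varphi k_0,w\rangle=\norm{w}_2^2$. That is, $\langle\varphi,w\rangle=h:=1-\eta^2$ with $\eta=|u'(0)|/(1-|u(0)|^2)$.

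With your splitting $1=k_0+\overline{u(0)}\,u$, the same computation shows that the $u$-term equals $|u(0)|^2\langle\varphi,w\rangle$, and you then solve for $\langle\varphi,w\rangle$. This is the paper's identity \eqref{eq:test-integral}, obtained there by applying the bilinear form to $k_0f$ and to $\widetilde k_0\cdot zf$.

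From the identity, $h\le\norm{\varphi}_\infty\norm{w}_1$. Cauchy--Schwarz together with $\norm{w}_2^2=(1-|u(0)|^2)h$ gives \eqref{eq:center-l2}. The ordinary triangle inequality $\norm{w}_1\le(1+\eta)\norm{k_0}_1$, which uses $|\widetilde k_0|=|k_0|$ on $\T$, together with $h=(1-\eta)(1+\eta)$ gives \eqref{eq:center-l1}. No reverse triangle inequality is needed. Your $L^1$ sketch rests on the same missing identity.
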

\begin{proof}
Define
\begin{equation*}\label{eq:test-function}
f=k^u_0-\frac{\overline{u'(0)}}{1-|u(0)|^2} \widetilde k_0^u \in K_u^\infty.
\end{equation*}
We notice that
\begin{equation}\label{eq:projection-data}
\norm{f}_2^2
=\|k^u_0\|^2-\frac{|u'(0)|^2}{(1-|u(0)|^2)^2}\|C_uk^u_0\|^2
=(1-|u(0)|^2)\left(1- \frac{|u'(0)|^2}{(1-|u(0)|^2)^2}\right)
=f(0).
\end{equation}
Involution of $C_u$ gives
\[
C_uf=C_uk^u_0-\frac{{u'(0)}}{1-|u(0)|^2}k^u_0= \widetilde k_0^u -\frac{{u'(0)}}{1-|u(0)|^2}k^u_0,\qquad (C_uf)(0)=0.
\]
Consequently, $(C_uf)/z$ belongs to $H^2$. Its boundary values satisfy
\[
\frac{C_uf}{z}=u\bar z\,\overline{zf}.
\]
Since $zf\in H^2$, the characterization of model space elements~\cite[Proposition 5.4]{Rossmodel} implies that
$zf\in K_u$. Boundedness follows from the fact that $f\in H^\infty$.

\vspace{11 pt}\noindent 
Let $\psi\in L^\infty(m)$ be any symbol of $R_0^u$: at least one exists
by Lemma~\ref{lem:rank-one}. For any $x,y\in K_u^\infty$, the same
pairing as in \eqref{eq:cauchy-rank-one} yields
\begin{equation}\label{eq:bilinear}
\int_\T\psi z\bar u\,xy\,dm
=\langle R_0^u x,C_uy\rangle
=\frac{x(0)y(0)}{1-|u(0)|^2}.
\end{equation}
The definitions of $k^u_0$ and $\widetilde k^u_0$ give the pointwise analytic identities
\[
k^u_0+\bar u(0) z\widetilde k^u_0=(1-\bar u(0) u)+\bar u(0)(u-u(0))=1-|u(0)|^2.
\]
\begin{equation}\label{eq:two-products}
(1-|u(0)|^2)f=k^u_0f+\bar u(0)\,\widetilde k^u_0 zf.
\end{equation}
All four factors $k^u_0,f,\widetilde k^u_0,zf$ belong to $K_u^\infty$. Hence
\eqref{eq:bilinear} applies to both products in
\eqref{eq:two-products}, and gives
\begin{align}
(1-|u(0)|^2)\int_\T\psi z\bar u f\,dm
&=\frac{k^u_0(0)f(0)}{1-|u(0)|^2}
  +\bar u(0)\frac{\widetilde k^u_0(0)(zf)(0)}{1-|u(0)|^2}
 =f(0).
\label{eq:test-integral}
\end{align}

\noindent 
Let $\eta=|u'(0)|/(1-|u(0)|^2)$ and  put $h=1-\eta^2\ge0$, because of Schwarz--Pick lemma~\cite{Rudin}.
Equations \eqref{eq:projection-data} and \eqref{eq:test-integral} imply
\begin{equation}\label{eq:basic-bound}
h=1-\frac{|u'(0)|^2}{(1-|u(0)|^2)^2}=\frac{|f(0)|}{1-|u(0)|^2}=\left|\int_\T\psi z\bar u f\,dm\right|
\le\norm{\psi}_\infty\norm{f}_1.
\end{equation}
Because $m(\T)=1$, Cauchy--Schwarz and
\eqref{eq:projection-data} give
\[
%1-\frac{|u'(0)|^2}{(1-|u(0)|^2)^2}=
h\le\norm{\psi}_\infty\norm{f}_2
=\norm{\psi}_\infty\sqrt{(1-|u(0)|^2) h}.
\]
If $h=0$, \eqref{eq:center-l2} is immediate. If $h>0$, division by
$\sqrt h$ and squaring give $h\le\norm{\psi}_\infty^2(1-|u(0)|^2)$.
Taking the infimum over all bounded symbols proves
\eqref{eq:center-l2}.

For the second estimate, the triangle inequality yields
\[
\norm{f}_1
\le\norm{k^u_0}_1+\frac{|u'(0)|}{1-|u(0)|^2}\norm{\widetilde k^u_0}_1
=(1+\eta)\norm{k^u_0}_1.
\]
Using this in \eqref{eq:basic-bound} and dividing by $1+\eta>0$ gives
$1-\eta\le\norm{\psi}_\infty\norm{k^u_0}_1$. The infimum over $\psi$
proves \eqref{eq:center-l1}.

\end{proof}

\noindent 
The problem of Proposition \ref{prop:center} is that it provides estimates only at $0$, i.e, for $R^u_0$. We have to extend them to every point $\lambda \in \D$.
The following proposition is the first key ingredient of the proof of Theorem \ref{thm:main}. 
The weighted composition below is the standard precomposition
transformation for truncated Toeplitz operators; compare it with
\cite[Proposition 4.1]{cgrw}.  

\begin{proposition}\label{prop:covariance}
For every non-constant inner function $\theta$ and every $\lambda\in\D$,
\begin{align}
1-\eta_\theta(\lambda)^2
&\le\beta_\theta(R_\lambda^\theta)^2
             (1-|\theta(\lambda)|^2),
\label{eq:global-l2}
%\\
%1-\eta_\theta(\lambda)
%&\le\beta_\theta(R_\lambda^\theta)
%             \Phi(|\theta(\lambda)|),
%\label{eq:global-l1}
%\notag
\end{align}
where 
\[
\eta_\theta(\lambda)=\frac{(1-|\lambda|^2)|\theta'(\lambda)|}{(1-|\theta(\lambda)|^2)}.
\]
\end{proposition}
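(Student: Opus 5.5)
The plan is to transport Proposition~\ref{prop:center} from the origin to an arbitrary point $\lambda$ through a Möbius change of variables. Fix $\lambda\in\D$ and let $\tau_\lambda(z)=(\lambda-z)/(1-\bar\lambda z)$, an involutive automorphism of $\D$ with $\tau_\lambda(0)=\lambda$. Put $u=\theta\circ\tau_\lambda$. This is again a non-constant inner function, with $u(0)=\theta(\lambda)$ and $|u'(0)|=(1-|\lambda|^2)|\theta'(\lambda)|$. Hence
\[
\frac{|u'(0)|}{1-|u(0)|^2}=\eta_\theta(\lambda),
\]
and inequality \eqref{eq:center-l2} applied to $u$ gives exactly \eqref{eq:global-l2}, except that $\beta_u(R_0^u)$ appears in place of $\beta_\theta(R_\lambda^\theta)$. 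The whole proof therefore reduces to showing $\beta_u(R_0^u)\le\beta_\theta(R_\lambda^\theta)$; in fact the two are equal by symmetry.

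For this I would use the weighted composition operator
\[
U f=\frac{\sqrt{1-|\lambda|^2}}{1-\bar\lambda z}\,(f\circ\tau_\lambda).
\]
It is unitary on $L^2(m)$, maps $H^2$ onto $H^2$, and maps $\theta H^2$ onto $u H^2$. Consequently it maps $K_\theta$ unitarily onto $K_u$ and intertwines the projections: $UP_\theta=P_uU$. It also preserves boundedness, so $U K_\theta^\infty=K_u^\infty$.

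The key step is the symbol identity. Multiplication commutes with composition, so for $\varphi\in L^\infty(m)$ we have $U(\varphi x)=(\varphi\circ\tau_\lambda)\,Ux$. Therefore
\[
U A_\varphi^\theta U^{*}=A^u_{\varphi\circ\tau_\lambda},
\qquad \|\varphi\circ\tau_\lambda\|_\infty=\|\varphi\|_\infty,
\]
because $\tau_\lambda$ is a homeomorphism of $\T$ that preserves null sets. It follows that $\varphi\mapsto\varphi\circ\tau_\lambda$ is an isometric bijection between the bounded symbols of $A$ and those of $UAU^*$, and hence $\beta_u(UAU^*)=\beta_\theta(A)$.

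It remains to identify $UR_\lambda^\theta U^*$ with $R_0^u$. I expect this to be the main obstacle, since it requires tracking unimodular constants. A direct check shows that $U k_\lambda^\theta$ is a multiple of $k_0^u$. Both sides are reproducing-type vectors at the point $0=\tau_\lambda(\lambda)$, and the norms match: $\|k_\lambda^\theta\|^2=\|k_0^u\|^2=1-|\theta(\lambda)|^2$. Thus $Uk_\lambda^\theta=c\,k_0^u$ with $|c|=1$. For the conjugate kernel, $C_uU$ and $UC_\theta$ are both conjugations on $K_u$, and they agree up to a unimodular factor. This factor comes from $\bar z$ versus $\overline{\tau_\lambda}$ together with the weight, so $U\widetilde k_\lambda^\theta=c'\,\widetilde k_0^u$ with $|c'|=1$.

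Hence $UR_\lambda^\theta U^*=\gamma R_0^u$ for some unimodular constant $\gamma$. Since $\beta_u(\gamma A)=\beta_u(A)$ for unimodular $\gamma$, we get $\beta_u(R_0^u)=\beta_\theta(R_\lambda^\theta)$. If the constant bookkeeping proves awkward, I would instead verify $UR_\lambda^\theta U^*=\gamma R_0^u$ directly on a test pair, using the bilinear formula \eqref{eq:cauchy-rank-one}. Pulling back the symbol $\varphi_\lambda$ of Lemma~\ref{lem:rank-one} gives an explicit bounded symbol, namely a constant multiple of $u\bar z$ on $\T$, and by Lemma~\ref{lem:rank-one} applied to $u$ at $0$ this is a symbol of a constant multiple of $R_0^u$. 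Substituting $\beta_u(R_0^u)=\beta_\theta(R_\lambda^\theta)$ into \eqref{eq:center-l2} yields \eqref{eq:global-l2}.
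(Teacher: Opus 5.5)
Your argument is the paper's proof. You use a Möbius-weighted composition unitary, the covariance $UA^\theta_\varphi U^{*}=A^u_{\varphi\circ\tau_\lambda}$ giving $\beta_u(UAU^*)=\beta_\theta(A)$, the identification of $UR^\theta_\lambda U^*$ with $R^u_0$ up to a unimodular constant, and then \eqref{eq:center-l2}. The only difference is that you use the involution $\tau_\lambda$ instead of $\psi_\lambda(z)=(\lambda+z)/(1+\bar\lambda z)$, which has the small advantage that $U=U^{-1}=U^*$.

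One step is misstated. In fact $\|k^\theta_\lambda\|^2=(1-|\theta(\lambda)|^2)/(1-|\lambda|^2)$, not $1-|\theta(\lambda)|^2$. So the direct check gives $Uk^\theta_\lambda=(1-|\lambda|^2)^{-1/2}k^u_0$, and $c$ is not unimodular. The conclusion survives: unitarity forces $|c'|=|c|$ and $\|k^\theta_\lambda\|^2=|c|^2\|k^u_0\|^2$, so the normalization in $R^\theta_\lambda$ absorbs $|c|^2$.

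Concretely, the unimodular factor between the conjugations is the constant $-1$. A boundary computation gives $UC_\theta U=-C_u$ on $K_u$, hence $U\widetilde k^\theta_\lambda=-(1-|\lambda|^2)^{-1/2}\widetilde k^u_0$ and $UR^\theta_\lambda U^*=-R^u_0$. The paper's choice of $\psi_\lambda$ yields exactly $R^u_0$; the sign is harmless since $\beta_u(-A)=\beta_u(A)$.

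Your fallback needs the same kind of correction. The pulled-back symbol is $\varphi_\lambda\circ\tau_\lambda=-(1-\bar\lambda z)\,u\bar z/(1-|\theta(\lambda)|^2)$ on $\T$. This is a constant multiple of $u\bar z$ only after you discard the term $\bar\lambda u/(1-|\theta(\lambda)|^2)\in uH^2$, which is a null symbol by Sarason's zero-symbol theorem.
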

\begin{proof}
Fix $\lambda \in \D$. Define
\begin{equation*}\label{eq:automorphism}
\psi_\lambda(z)=\frac{\lambda+z}{1+\bar\lambda z},
\quad s_\lambda(z)=\frac{\sqrt{1-|\lambda|^2}}{1+\bar\lambda z},
\quad Uh=s_\lambda(h\circ\psi_\lambda),
\quad u=\theta\circ\psi_\lambda.
\end{equation*}
We have $\psi_\lambda'=s_\lambda^2$ and, on $\T$, the change of variables
has Jacobian
\begin{equation*}\label{eq:boundary-jacobian}
dm(\psi_\lambda(\zeta))
=|\psi_\lambda'(\zeta)|\,dm(\zeta)
=|s_\lambda(\zeta)|^2\,dm(\zeta).
\end{equation*}
It follows that $\norm{Uh}_2=\norm{h}_2$ for $h\in L^2(m)$. The inverse
is explicitly
\begin{equation*}\label{eq:unitary-inverse}
(U^{-1}h)(w)=\frac{\sqrt{1-|\lambda|^2}}{1-\bar\lambda w}
 h\!\left(\frac{w-\lambda}{1-\bar\lambda w}\right).
\end{equation*}
Thus $U$ is unitary on $L^2(m)$. Both $U$ and $U^{-1}$ map analytic
polynomials into $H^\infty$. Since analytic polynomials are dense in
$H^2$ and $H^2$ is closed in $L^2$, both operators preserve $H^2$.
They therefore map $H^2$ onto itself. Moreover,
\[
U(\theta h)=s_\lambda (\theta \circ \psi_\lambda) (h \circ \psi_\lambda)= u\,Uh,
\]
so $U(\theta H^2)=uH^2$. Taking orthogonal complements in $H^2$ gives
\begin{equation*}\label{eq:model-unitary}
UK_\theta=K_u,
\qquad UP_\theta U^{-1}=P_u
\end{equation*}
as operators on the appropriate subspaces of $L^2(m)$.

\vspace{11 pt}\noindent 
For a bounded symbol $\varphi$, multiplication operators satisfy
$UM_\varphi U^{-1}=M_{\varphi\circ\psi_\lambda}$. Consequently,
\begin{equation}\label{eq:operator-covariance}
UA_\varphi^\theta U^{-1}
=U P_\theta M_\varphi U^{-1}=P_uUM_\varphi U^{-1}=P_uM_{\varphi\circ\psi_\lambda}UU^{-1}=A_{\varphi\circ\psi_\lambda}^{u}
\quad\text{on }K_u.
\end{equation}
The boundary map $\psi_\lambda$ is a smooth diffeomorphism of $\T$ with
strictly positive Jacobian. It and its inverse preserve null sets.
Composition with it is therefore an isometric bijection of $L^\infty(m)$.
Applying \eqref{eq:operator-covariance} in both directions shows that it
bijects the entire set of bounded symbols of $A$ with that of
$UAU^{-1}$. In particular,
\begin{equation}\label{eq:cost-covariance}
\beta_u(UAU^{-1})=\beta_\theta(A)
\end{equation}
whenever $A$ has a bounded symbol. 

\vspace{11 pt}\noindent 
To identify the transformed rank-one operator, observe that
\[
1-\bar\lambda\psi_\lambda(z)
 =\frac{1-|\lambda|^2}{1+\bar\lambda z},
\qquad
\psi_\lambda(z)-\lambda
 =\frac{(1-|\lambda|^2) z}{1+\bar\lambda z}.
\]
By exact computations, we obtain that
\begin{align*}
    (Uk_\lambda^\theta)(z)
&=\frac{\sqrt{1-|\lambda|^2}}{1+\overline{\lambda}z}
  \frac{1-\overline{\theta(\lambda)}
                 \theta(\psi_\lambda(z))}
       {1-\overline{\lambda}\psi_\lambda(z)}=\frac{\sqrt{1-|\lambda|^2}}{1+\overline{\lambda}z}
  \frac{1-\overline{u(0)}u(z)}
       {(1-|\lambda|^2)/(1+\overline{\lambda}z)}\\
&=\frac{1-\overline{u(0)}u(z)}{\sqrt{1-|\lambda|^2}}
=\frac{k_0^u(z)}{\sqrt{1-|\lambda|^2}}
\end{align*}
 and 
\begin{align*}
(U\widetilde{k}_\lambda^\theta)(z)
&=\frac{\sqrt{1-|\lambda|^2}}{1+\overline{\lambda}z}
  \frac{\theta(\psi_\lambda(z))-\theta(\lambda)}
       {\psi_\lambda(z)-\lambda}=\frac{\sqrt{1-|\lambda|^2}}{1+\overline{\lambda}z}
  \frac{u(z)-u(0)}
       {(1-|\lambda|^2) z/(1+\overline{\lambda}z)}\\
&=\frac{1}{\sqrt{1-|\lambda|^2}}\frac{u(z)-u(0)}{z}=\frac{\widetilde{k}_0^u(z)}{\sqrt{1-|\lambda|^2}}.
\end{align*}
For a unitary $U$, $U(v\otimes w)U^{-1}=Uv\otimes Uw$. Since
\[
\norm{k_0^u}_2^2=1-|u(0)|^2=1-|\theta(\lambda)|^2
=(1-|\lambda|^2)\|k^\theta_\lambda\|^2,
\]
all factors in $Uk_\lambda^\theta$ and $U\widetilde{k}_\lambda^\theta$ cancel in the normalization:
\begin{equation*}\label{eq:rank-one-covariance}
UR_\lambda^\theta U^{-1}=U\left( \frac{\widetilde{k}_\lambda^\theta\otimes k^\theta_\lambda }{\|k^\theta_\lambda\|^2}\right)U^{-1}=\frac{ \widetilde{k}_0^u\otimes k^u_0 }{(1-|\lambda|^2)\|k^\theta_\lambda\|^2}=R_0^u,
\qquad
\beta_u(R_0^u)=\beta_\theta(R_\lambda^\theta).
\end{equation*}
Finally,
\[
u(0)=\theta(\lambda),\qquad
u'(0)=(1-|\lambda|^2)\theta'(\lambda),\qquad
\frac{|u'(0)|}{1-|u(0)|^2}=\eta_\theta(\lambda).
\]
Applying \eqref{eq:center-l2} to $u$ proves \eqref{eq:global-l2}.

\end{proof}

%\vspace{11 pt}
\section{A geometric lemma for holomorphic self-maps}\label{Sect4}

\noindent
The following lemma is the second key step. 
We recall that $A\subset X$ is a \emph{retract} of $X$ if
there exists a continuous map
\[
r:X\longrightarrow A
\]
such that $r(a)=a$ for every $a\in A$.
Such a map $r$ is called a \emph{retraction of $X$ onto $A$}.
\begin{lemma}\label{lem:geometry}
Let $F:\D\to\D$ be a non-constant holomorphic function. Suppose that
there are $r\in(0,1)$ and $c>0$ such that
\begin{equation}\label{eq:hyperbolic-lower}
\frac{(1-|z|^2)|F'(z)|}{1-|F(z)|^2}\ge c
\qquad\text{whenever }|F(z)|>r.
\end{equation}
For every $s\in(r,1)$, the closed sublevel set
$E_s=\{z\in\D:|F(z)|\le s\}$ is nonempty and is a continuous retract
of $\D$. For every $R\in(r,1)$, the open sublevel set
$\Omega_R=\{z\in\D:|F(z)|<R\}$ is nonempty and path connected.
\end{lemma}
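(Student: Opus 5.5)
The plan is to construct the retraction by flowing points downhill along a holomorphic "Newton flow" for $F$, which lowers $|F|$ at an exact, explicit rate. First note that on the open set $U_r=\{z\in\D:|F(z)|>r\}$ we have $F\neq 0$ trivially, and $F'\neq 0$ by \eqref{eq:hyperbolic-lower}. So the holomorphic vector field
\[
V(w)=-\frac{F(w)}{F'(w)}
\]
is well defined and smooth on $U_r$. Along any solution of $w'(t)=V(w(t))$ we get $\frac{d}{dt}F(w(t))=-F(w(t))$, hence
\[
F(w(t))=e^{-t}F(w(0)).
\]
Thus $|F|$ decreases exactly like $e^{-t}$ while $\arg F$ stays fixed.

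The key quantitative point is that, on $\{|F|\ge s\}$ with $s>r$, the field has bounded hyperbolic length. Indeed, \eqref{eq:hyperbolic-lower} gives
\[
\frac{|V(w)|}{1-|w|^2}=\frac{|F(w)|}{|F'(w)|(1-|w|^2)}\le\frac{|F(w)|}{c\,(1-|F(w)|^2)}\le\frac{1}{c(1-s^2)}.
\]
Start at $z$ with $|F(z)|>s$ and run the flow for times $t\in[0,T_z]$, where $T_z=\log(|F(z)|/s)$. Along this trajectory $|F|\ge s$, so the hyperbolic distance travelled is at most $T_z/(c(1-s^2))$. Hence the trajectory stays in a compact hyperbolic ball inside $\D$, and it cannot leave $U_r$ since $|F|\ge s>r$ there. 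Standard ODE theory then shows the solution exists on all of $[0,T_z]$.

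I then define
\[
\rho(z)=w_z(T_z)\ \text{ if } |F(z)|>s,\qquad \rho(z)=z\ \text{ if } |F(z)|\le s.
\]
By construction $|F(\rho(z))|=s$ in the first case, so $\rho$ maps $\D$ into $E_s$ and fixes $E_s$. For continuity, I use continuous dependence of ODE solutions on initial data and on the stopping time, together with continuity of $z\mapsto T_z$. At points with $|F(z)|=s$ we have $T_z=0$, so the two definitions match; near such points $T_z\to0$ and the hyperbolic-speed bound forces $\rho(z)\to z$. The step I expect to need the most care is exactly this continuity across the level set $\{|F|=s\}$, together with the global existence of the flow. Both are handled by the uniform hyperbolic speed bound above, which is the only place the hypothesis is really used.

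Nonemptiness of $E_s$: since $F$ is non-constant and maps into $\D$, some point $z$ has $|F(z)|<1$. If $|F(z)|\le s$ we are done. Otherwise the flow from $z$ reaches a point where $|F|=s$, so $E_s$ is nonempty. Being the continuous image $\rho(\D)$ of the path-connected disc, $E_s$ is path connected. Finally, for $R\in(r,1)$ we write
\[
\Omega_R=\bigcup_{r<s<R}E_s.
\]
This is an increasing union of nonempty path-connected sets, so any two points of $\Omega_R$ lie in a common $E_s$ and can be joined by a path there. Hence $\Omega_R$ is nonempty and path connected.
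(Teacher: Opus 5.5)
Your construction is the paper's own: the Newton field $V=-F/F'$ on $\{|F|>r\}$, the stopping time $T_z=\log(|F(z)|/s)$, and hyperbolic speed control, used both for existence up to $T_z$ and for continuity across $\{|F|=s\}$. However, the uniform bound you call the key quantitative point is false. On $\{|F|\ge s\}$ one has $1-|F|^2\le 1-s^2$, so $|F|/(1-|F|^2)\ge s/(1-s^2)$. This quantity is unbounded as $|F|\to1$, which does occur, e.g.\ for $F=\theta$ inner, the case the paper needs. So the second inequality in your display goes the wrong way, and the distance bound $T_z/(c(1-s^2))$ is unjustified.

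The repair is to bound per trajectory rather than uniformly. On $[0,T_z]$ you have $s\le|F(w_z(t))|=e^{-t}|F(z)|\le|F(z)|$, and $x\mapsto x/(1-x^2)$ is increasing, so the hyperbolic speed is at most $|F(z)|/\bigl(c(1-|F(z)|^2)\bigr)$. Integrating exactly, as the paper does, gives $d_\D(z,w_z(t))\le\frac1c\bigl(\arctanh|F(z)|-\arctanh s\bigr)$. This is finite for each $z$ and locally uniform in $z$, which is all your argument actually uses. It confines the trajectory to a compact subset of $\{|F|>r\}$, so the maximal solution extends past $T_z$. It also tends to $0$ as $|F(z)|\downarrow s$, so $\rho$ is continuous at the level set. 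With this change your proof is complete.

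Your final step is correct and a little simpler than the paper's. You write $\Omega_R=\bigcup_{r<s<R}E_s$ as an increasing union of nonempty path-connected sets. The paper instead fixes a single $s<R$ and joins each $p\in\Omega_R\setminus E_s$ to $\Ret_s(p)$ along its flow line before using a path inside $E_s$. You avoid those flow-line paths by choosing $s\in(r,R)$ with $s\ge\max\{|F(p)|,|F(q)|\}$, so both endpoints already lie in the path-connected set $E_s$.
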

\begin{proof}
Fix $r<s<1$ and write $\Omega=\{z\in\D:|F(z)|>r\}$.
On this open set, \eqref{eq:hyperbolic-lower} implies
\[
|F'(z)|\ge c\frac{1-|F(z)|^2}{1-|z|^2}>0.
\]
Thus there are no critical points of $F$ in $\Omega$, and the vector field
\begin{equation*}\label{eq:vector-field}
V(z)=-\frac{F(z)}{F'(z)}
\end{equation*}
is holomorphic there. Viewed as a real vector field on an open subset of
$\mathbb R^2$, it is locally Lipschitz. The local existence and uniqueness
theorem for ordinary differential equations therefore gives a local flow
$\phi(t,z)$ with continuous dependence on $t$ and $z$.

\vspace{11 pt}
\noindent
Let $z_0\in\D$ satisfy $b=|F(z_0)|>s$, and let
$z(t)=\phi(t,z_0)$ be its maximal forward solution in $\Omega$, defined
for $0\le t<\tau$. We have that
\[
\frac{d}{dt}F(z(t))
=F'(z(t))\dot z(t)=-F(z(t)).
\]
Solving this differential equation yields
\begin{equation}\label{eq:flow-image}
F(z(t))=e^{-t}F(z_0),\qquad |F(z(t))|=be^{-t}
\quad(0\le t<\tau).
\end{equation}
The time at which the image should reach modulus $s$ is
\begin{equation*}\label{eq:stopping-time}
T(z_0)=\log\frac{b}{s}>0.
\end{equation*}

\noindent
We next justify existence up to and beyond $T(z_0)$.
Use the hyperbolic metric on $\D$ with length element
$|dz|/(1-|z|^2)$, and denote its distance by $d_\D$. This normalization
gives $d_\D(0,z)=\arctanh|z|$.
Hyperbolic distance dominates Euclidean distance,
and on every closed Euclidean disk compactly contained in \(\mathbb D\)  the two metrics induce the same topology.
Whenever the trajectory exists and $0\le t\le T(z_0)$,
\eqref{eq:hyperbolic-lower} and \eqref{eq:flow-image} imply
\begin{equation}\label{eq:flow-speed}
\frac{|\dot z(t)|}{1-|z(t)|^2}
=\frac{|F(z(t))|}{(1-|z(t)|^2)|F'(z(t))|}
\le\frac1c\frac{be^{-t}}{1-b^2e^{-2t}}.
\end{equation}
The function on the right is defined and continuous on the whole interval
$[0,T(z_0)]$, independently of how long the solution is initially known
to exist. Its denominator is at least $1-b^2>0$.

Suppose for a contradiction that $\tau\le T(z_0)$. For
$0\le t_1<t_2<\tau$, integration along the trajectory gives
\[
d_\D(z(t_1),z(t_2))
\le\frac1c\int_{t_1}^{t_2}
        \frac{be^{-t}}{1-b^2e^{-2t}}\,dt.
\]
The integral tends to zero as $t_1,t_2\to \tau$. Thus the trajectory
is Cauchy in the complete hyperbolic metric and has a limit
$z_\tau\in\D$. By continuity of $F$,
\[
|F(z_\tau)|=be^{-\tau}\ge be^{-T(z_0)}=s>r.
\]
The limit therefore lies in $\Omega$, where $F'\ne0$ and the vector field
is regular. The usual continuation theorem extends the solution past
$\tau$, contradicting maximality. More explicitly, define
$z(\tau)=z_\tau$. The integral equation for $\dot z=V(z)$ extends to
$\tau$ by continuity of $V$, and a local solution starting at $z_\tau$
can be joined to it using uniqueness. We have proved
$\tau>T(z_0)$. In particular, the endpoint at the prescribed time exists
and has $F$-modulus exactly $s$.

\vspace{11 pt}\noindent
Define
\begin{equation}\label{eq:retraction}
\Ret_s(z)=
\begin{cases}
z,& |F(z)|\le s,\\[2pt]
\phi\!\left(\log\dfrac{|F(z)|}{s},z\right),& |F(z)|>s.
\end{cases}
\end{equation}
The preceding argument makes this well defined on all of $\D$.
Every point outside $E_s$ is mapped to its boundary level $|F|=s$, and
every point of $E_s$ is fixed. Hence $\Ret_s(\D)=E_s$.
This set is nonempty: if $|F|\le s$ everywhere it is all of $\D$;
otherwise it contains an endpoint of a trajectory just constructed.

On $\{|F|<s\}$ the map is the identity.
On $\{|F|>s\}$ it is continuous
by joint continuous dependence of the flow and continuity of the stopping
time. To apply that dependence at an endpoint, recall that each relevant
solution exists beyond its stopping time; the domain of the local flow
is open around that time and initial condition.

It remains to check continuity on the level $|F|=s$.
Integration of \eqref{eq:flow-speed} from $0$ to $T(z)$ gives, for
$|F(z)|>s$,
\begin{align*}
d_\D(z,\Ret_s(z))
&\le\frac1c\int_0^{T(z)}
 \frac{|F(z)|e^{-t}}{1-|F(z)|^2e^{-2t}}\,dt=\frac1c\bigl(\arctanh|F(z)|-\arctanh s\bigr).
\label{eq:retraction-distance}
\end{align*}
%The equality follows either by substituting $q=|F(z)|e^{-t}$ or by differentiating $\arctanh(|F(z)|e^{-t})$.
If $[x]_+=\max\{x,0\}$, the estimate valid for all $z\in\D$ is
\begin{equation*}\label{eq:positive-part-distance}
d_\D(z,\Ret_s(z))
\le\frac1c\bigl[\arctanh|F(z)|-\arctanh s\bigr]_+.
\end{equation*}
For $z_n\to p$ with $|F(p)|=s$, its right-hand side tends to zero.
Since Euclidean convergence to an interior point is also hyperbolic
convergence, the triangle inequality yields
\[
d_\D(\Ret_s(z_n),p)
\le d_\D(\Ret_s(z_n),z_n)+d_\D(z_n,p)\longrightarrow0.
\]
This handles sequences approaching the level from either side, and proves
continuity there. Thus $\Ret_s$ is a continuous retraction onto $E_s$.

\vspace{11 pt}\noindent 
The image of a path connected space under a continuous map is path
connected. Here a path in $E_s$ between $p',q'\in E_s$ is explicitly
\begin{equation}\label{eq:closed-sublevel-path}
\gamma(t)=\Ret_s((1-t)p'+tq'),\qquad 0\le t\le1,
\end{equation}
because $\D$ is convex and $\Ret_s$ fixes $p'$ and $q'$.
Now fix $R\in(r,1)$ and choose $s$ with $r<s<R$.
The nonempty set $E_s$ is contained in $\Omega_R$.
If $p\in\Omega_R\setminus E_s$, the path
\[
\alpha_p(t)=\phi(tT(p),p),\qquad 0\le t\le1,
\]
joins $p$ to $\Ret_s(p)$ and stays in $\Omega_R$, since
$|F(\alpha_p(t))|=e^{-tT(p)}|F(p)|\le|F(p)|<R$.
For $p\in E_s$, take the constant path. Given $p,q\in\Omega_R$,
concatenate $\alpha_p$, the path \eqref{eq:closed-sublevel-path} between
$\Ret_s(p)$ and $\Ret_s(q)$, and $\alpha_q$ in reverse. All three pieces
lie in $\Omega_R$, proving the required path connectedness.

\end{proof}

\noindent
The geometric mechanism in Lemma~\ref{lem:geometry} is closely
related to the covering characterization of one-component inner
functions obtained by Ivrii and Urba\'nski.
Their Theorem~7.2 in \cite{ivrii} states, in particular,
that a non-constant inner function $F$ is one-component if and
only if, for some $\rho\in(0,1)$, the restriction
\[
F:F^{-1}(A_\rho)\longrightarrow A_\rho,
\qquad
A_\rho=\{w\in\D:\rho<|w|<1\},
\]
is a covering map.
The lower bound on the hyperbolic derivative in
Lemma~\ref{lem:geometry} provides a quantitative condition
ensuring continuation of inverse branches.

%\vspace{11 pt}
\section{Proof of Theorem \ref{thm:main}}\label{Sect5}

\noindent 
We can now write the proof of Theorem \ref{thm:main}.
\begin{proof}[Proof of Theorem~\ref{thm:main}]

Suppose (i) holds. Lemma~\ref{lem:quotient} and
$\norm{R_\lambda^\theta}=1$ imply
\[
\beta_\theta(R_\lambda^\theta)\le B_\theta<\infty
\qquad(\lambda\in\D).
\]

\vspace{11 pt}\noindent 
Assume (ii). Lemma~\ref{lem:rank-one} gives $M_\theta\ge1$.
Proposition~\ref{prop:covariance} yields
\begin{equation*}\label{eq:eta-from-cost}
\left( \frac{(1-|\lambda|^2)|\theta'(\lambda)|}{1-|\theta(\lambda)|^2}\right)^2=\eta_\theta(\lambda)^2
\ge1-M_\theta^2(1-|\theta(\lambda)|^2).
\end{equation*}
Set
\[
r_M=\sqrt{1-\frac1{2M_\theta^2}}\in(0,1).
\]
Whenever $|\theta(\lambda)|>r_M$, we have
$M_\theta^2(1-|\theta(\lambda)|^2)<1/2$, and hence
\[
\eta_\theta(\lambda)>\frac1{\sqrt2}.
\]
Lemma~\ref{lem:geometry}, with $F=\theta$, $r=r_M$, and
$c=1/\sqrt2$, shows that every sublevel set with $r_M<R<1$ is nonempty
and path connected. This proves (iii).

\vspace{11 pt}\noindent 
Finally, (iii)$\Rightarrow$(i) is the bounded-symbol theorem of
Baranov, Bessonov, and Kapustin, \cite[Corollary 2.5]{bbk}.

\end{proof}

\noindent 
We point out that Lemma \ref{lem:geometry} and Theorem \ref{thm:main} provide a geometric characterization of one-component inner functions. See also \cite[Theorem 7.2]{ivrii}.

\begin{corollary}
Let $\theta$ be a non-constant inner function. Then $\theta$ is one-component if and only if
there are $r\in(0,1)$ and $c>0$ such that
\begin{equation*}\label{eq:hyperbolic-lower2}
\frac{(1-|z|^2)|\theta'(z)|}{1-|\theta(z)|^2}\ge c
\qquad\text{whenever }|\theta(z)|>r.
\end{equation*}
\end{corollary}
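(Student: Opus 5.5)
The plan is to prove the two implications separately, using Theorem~\ref{thm:main} for one direction and Lemma~\ref{lem:geometry} for the other.

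\emph{The hyperbolic derivative bound implies one-component.} Assume there are $r\in(0,1)$ and $c>0$ with the stated lower bound whenever $|\theta(z)|>r$. Since $\theta$ is inner and non-constant, it is a non-constant holomorphic self-map of $\D$. Lemma~\ref{lem:geometry} therefore applies with $F=\theta$. For any $R\in(r,1)$ it gives that $\{z\in\D:|\theta(z)|<R\}$ is nonempty and path connected, hence connected. This is exactly the definition of one-component, so no further work is needed here.

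\emph{One-component implies the hyperbolic derivative bound.} Assume $\theta$ is one-component. By the implication (iii)$\Rightarrow$(ii) of Theorem~\ref{thm:main} (obtained through (iii)$\Rightarrow$(i)$\Rightarrow$(ii)), we have $M_\theta<\infty$. Proposition~\ref{prop:covariance} gives
\[
\eta_\theta(\lambda)^2\ge 1-\beta_\theta(R^\theta_\lambda)^2(1-|\theta(\lambda)|^2)\ge 1-M_\theta^2(1-|\theta(\lambda)|^2).
\]
Set $r=\sqrt{1-1/(2M_\theta^2)}$, which lies in $(0,1)$ because $M_\theta\ge1$ by Lemma~\ref{lem:rank-one}. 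For $|\theta(\lambda)|>r$ the right-hand side exceeds $1/2$, so $\eta_\theta(\lambda)>1/\sqrt2$. Hence the corollary holds with $c=1/\sqrt2$. This is the same computation already carried out in the proof of Theorem~\ref{thm:main}, so it can be cited directly.

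No step is a real obstacle, since all the analytic content sits in Theorem~\ref{thm:main}, and ultimately in the theorem of Baranov, Bessonov and Kapustin~\cite[Corollary 2.5]{bbk}. The only point needing care is to invoke the chain (iii)$\Rightarrow$(i)$\Rightarrow$(ii) explicitly, rather than only the (ii)$\Rightarrow$(iii) direction that is proved in detail in Section~\ref{Sect5}.
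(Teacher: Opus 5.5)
Your proposal is correct and follows the route the paper indicates when it says that Lemma~\ref{lem:geometry} and Theorem~\ref{thm:main} give this characterization. Sufficiency comes from Lemma~\ref{lem:geometry}, and necessity comes from the chain (iii)$\Rightarrow$(i)$\Rightarrow$(ii) of Theorem~\ref{thm:main} combined with Proposition~\ref{prop:covariance}, with $r=\sqrt{1-1/(2M_\theta^2)}$ and $c=1/\sqrt2$, exactly as in the paper's proof of (ii)$\Rightarrow$(iii).
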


\noindent 
The failure of the one-component property has a quantitative
manifestation in the rank-one family considered above. Although
each operator $R_\lambda^\theta$ admits a bounded symbol, the
minimal norms of these symbols cannot remain uniformly bounded.
\begin{corollary}\label{cor:blowup}
If $\theta$ is not one-component, there is a sequence
$(\lambda_n)\subset\D$ such that
\[
|\theta(\lambda_n)|\longrightarrow1,
\qquad \eta_\theta(\lambda_n)=\frac{(1-|\lambda_n|^2)|\theta'(\lambda_n)|}{(1-|\theta(\lambda_n)|^2)}<\frac12,
\]
and
\begin{equation*}\label{eq:cost-blowup}
\beta_\theta(R_{\lambda_n}^\theta)
\ge\frac{\sqrt3}{2\sqrt{1-|\theta(\lambda_n)|^2}}
\longrightarrow\infty.
\end{equation*}
\end{corollary}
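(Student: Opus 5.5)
The plan is to argue by contraposition through Lemma~\ref{lem:geometry}, and then to turn the resulting sequence into a lower bound on symbol norms using Proposition~\ref{prop:covariance}.

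\emph{Producing the sequence.} Suppose, for contradiction, that no sequence as in the statement exists. Then there is $r\in(0,1)$ such that $\eta_\theta(z)\ge\frac12$ whenever $|\theta(z)|>r$. To see this, assume instead that for every $n$ there is $\lambda_n$ with $|\theta(\lambda_n)|>1-\frac1n$ and $\eta_\theta(\lambda_n)<\frac12$. That sequence would be exactly the one we want, contradicting our assumption. With such an $r$ in hand, Lemma~\ref{lem:geometry} applies with $F=\theta$ and $c=\frac12$. It shows that $\{z\in\D:|\theta(z)|<R\}$ is nonempty and path connected for every $R\in(r,1)$. Hence $\theta$ is one-component, which contradicts the hypothesis. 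So there are $\lambda_n$ with $|\theta(\lambda_n)|\to1$ and $\eta_\theta(\lambda_n)<\frac12$.

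\emph{The lower bound.} We apply Proposition~\ref{prop:covariance} at $\lambda_n$. Since $1-\eta_\theta(\lambda_n)^2>1-\frac14=\frac34$, it gives
\[
\frac34\le\beta_\theta(R_{\lambda_n}^\theta)^2\bigl(1-|\theta(\lambda_n)|^2\bigr).
\]
Here $1-|\theta(\lambda_n)|^2>0$ because $\theta$ is non-constant, so $|\theta|<1$ on $\D$. We may therefore divide and take square roots to get
\[
\beta_\theta(R_{\lambda_n}^\theta)\ge\frac{\sqrt3}{2\sqrt{1-|\theta(\lambda_n)|^2}}.
\]
The right-hand side tends to $\infty$ because $|\theta(\lambda_n)|\to1$.

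There is no real obstacle here. The only delicate point is the negation step: it must produce a uniform constant $c=\frac12$ on a full region $\{|\theta|>r\}$, so that the hypothesis of Lemma~\ref{lem:geometry} holds exactly. Note that Lemma~\ref{lem:geometry} needs only that $\theta$ is non-constant, which is given.
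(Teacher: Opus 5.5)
Your proposal is correct and follows the paper's proof essentially step for step. You negate the conclusion to get $\eta_\theta\ge\frac12$ on a region $\{|\theta|>r\}$, apply Lemma~\ref{lem:geometry} with $c=\frac12$ to force $\theta$ to be one-component, and then read off the lower bound from \eqref{eq:global-l2} using $1-\eta_\theta(\lambda_n)^2>\frac34$.
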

\begin{proof}
For every $r\in(0,1)$ there must be a point $\lambda$ with
$|\theta(\lambda)|>r$ and $\eta_\theta(\lambda)<1/2$.
Otherwise Lemma~\ref{lem:geometry}, applied with $c=1/2$, would make
$\theta$ one-component. Choose such points for a sequence $r_n\uparrow1$.
Estimate \eqref{eq:global-l2} then gives
\[
\beta_\theta(R_{\lambda_n}^\theta)^2
\ge\frac{1-\eta_\theta(\lambda_n)^2}
         {1-|\theta(\lambda_n)|^2}
\ge\frac{3}{4(1-|\theta(\lambda_n)|^2)},
\]
which proves the assertion.

\end{proof}

\noindent The preceding divergence can be converted into the existence of
a single operator without a bounded symbol. Since the normalized
rank-one operators have trace norm one, the closed graph theorem
allows this obstruction to be realized within the trace class.

\begin{corollary}\label{cor:trace-class}
If $\theta$ is not one-component, there exists a trace-class truncated
Toeplitz operator on $K_\theta$ with no bounded symbol.
\end{corollary}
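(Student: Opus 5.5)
We argue by contradiction, using a closed graph (Banach space) argument on the space of trace-class truncated Toeplitz operators. Let $\TT_1(\theta)$ denote the set of operators in $\TT(\theta)$ that belong to the trace class $S_1$, equipped with the norm $\norm{\cdot}_{S_1}$.

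First we check that $\TT_1(\theta)$ is a Banach space. Convergence in $S_1$ implies convergence in operator norm, and $\TT(\theta)$ is closed in operator norm by Lemma~\ref{lem:quotient}. Hence $\TT_1(\theta)=\TT(\theta)\cap S_1$ is a closed subspace of the Banach space $S_1$.

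Now suppose, for contradiction, that every operator in $\TT_1(\theta)$ has a bounded symbol. Mirror the proof of Lemma~\ref{lem:quotient}. Let $N_\theta$ be the kernel of $\varphi\mapsto A^\theta_\varphi$ on $L^\infty(m)$, and consider the closed subspace
\[
X=\{\varphi\in L^\infty(m): A^\theta_\varphi\in S_1\}\supseteq N_\theta,
\]
equipped with the norm $\norm{\varphi}_\infty+\norm{A_\varphi^\theta}_{S_1}$. We check that $X$ is complete. If $\varphi_n$ is Cauchy in this norm, then $\varphi_n\to\varphi$ in $L^\infty$ and $A_{\varphi_n}\to T$ in $S_1$. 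By \eqref{eq:contractive-symbol}, we also have $A_{\varphi_n}\to A_\varphi$ in operator norm, so $T=A_\varphi$.

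The induced map
\[
X/N_\theta\to\TT_1(\theta),\qquad \varphi+N_\theta\mapsto A_\varphi^\theta,
\]
is continuous, injective, and, under our assumption, surjective. The bounded inverse theorem then gives a constant $C$ such that
\[
\beta_\theta(A)\le\inf_{\nu\in N_\theta}\bigl(\norm{\varphi+\nu}_\infty+\norm{A}_{S_1}\bigr)\le C\norm{A}_{S_1}\qquad (A\in\TT_1(\theta)).
\]
(Alternatively, one can avoid the auxiliary norm by applying the closed graph theorem to a selection.)

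Apply this to $A=R^\theta_{\lambda}$. By Lemma~\ref{lem:rank-one}, $\norm{R^\theta_\lambda}_{S_1}=1$, so $\beta_\theta(R^\theta_\lambda)\le C$ for every $\lambda\in\D$. This contradicts Corollary~\ref{cor:blowup}, which gives $\beta_\theta(R^\theta_{\lambda_n})\to\infty$ when $\theta$ is not one-component. Hence some element of $\TT_1(\theta)$ has no bounded symbol.

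The main obstacle is justifying the completeness of the domain space, so that the bounded inverse theorem applies. The argument above handles this by using the graph-type norm on $X$.
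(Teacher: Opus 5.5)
Your proof is correct and is essentially the paper's argument. The paper applies the closed graph theorem directly to $A\mapsto\widetilde Q_\theta^{-1}A$ from $\TT(\theta)\cap S_1(K_\theta)$ into $E_\theta=L^\infty(m)/N_\theta$; no selection is needed because $\widetilde Q_\theta$ is injective. Your graph-normed symbol space with the bounded inverse theorem is an equivalent repackaging of that step, and your final contradiction via Corollary~\ref{cor:blowup} matches the paper's appeal to Theorem~\ref{thm:main}. One wording slip is harmless: your $X$ is in general not a closed subspace of $L^\infty(m)$, but you verify its completeness in the graph norm directly.
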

\begin{proof}
Let $S_1(K_\theta)$ denote the trace class, with norm $\norm{\cdot}_{S_1}$,
and set
\[
X=\TT(\theta)\cap S_1(K_\theta).
\]
Since $\norm{A}\le\norm{A}_{S_1}$ and $\TT(\theta)$ is closed in
operator norm, $X$ is closed in $S_1(K_\theta)$. It is consequently
Banach for the trace norm.
As in the proof of Lemma~\ref{lem:quotient}, let
$E_\theta=L^\infty(m)/\ker Q_\theta$. The induced map
\[
\widetilde Q_\theta:E_\theta\longrightarrow\TT(\theta)
\]
is continuous and injective, regardless of whether it is surjective.

\vspace{11 pt}\noindent 
Suppose by contradiction that every element of $X$ has a bounded symbol. Then
\[
L:X\longrightarrow E_\theta,\qquad
L(A)=\widetilde Q_\theta^{-1}A
\]
is a well-defined linear map on all of $X$. Its graph is closed.
Indeed, if $A_n\to A$ in trace norm and $L(A_n)\to e$ in $E_\theta$,
continuity of $\widetilde Q_\theta$ gives
\[
\widetilde Q_\theta e
=\lim_n\widetilde Q_\theta L(A_n)=\lim_n A_n=A
\]
in operator norm. Injectivity yields $e=L(A)$.
The closed graph theorem therefore gives a constant $C<\infty$ with
\[
\beta_\theta(A)=\norm{L(A)}_{E_\theta}
\le C\norm{A}_{S_1}\qquad(A\in X).
\]
By Lemma~\ref{lem:rank-one}, each $R_\lambda^\theta$ belongs to $X$ and
has trace norm one. Hence $M_\theta\le C$, and
Theorem~\ref{thm:main} implies that $\theta$ is one-component, a
contradiction.

\end{proof}
\noindent 
%Corollary \ref{cor:trace-class} can be clearly reformulated also for Schatten class $p>1$ in $\TT(\theta)$.

%\vspace{11 pt}
\section{Consequences of Theorem \ref{thm:main}}\label{Sect6}

\noindent 
Theorem~\ref{thm:main} supplies the geometric condition missing from several previous characterizations of the bounded symbol property. In this section we combine it with the results of Baranov, Bessonov, and
Kapustin~\cite{bbk}, O'Loughlin~\cite{O'Loughlin}, and Aleksandrov~\cite{Ale2}.

\subsection{Carleson embeddings and the exact factorization space}
For $1\le p<\infty$, write
\[
K_\omega^p=H^p\cap\omega\overline{zH^p}
\]
for the model space associated with an inner function $\omega$; thus
$K_\omega^2=K_\omega$. Following \cite{bbk}, let $\mathcal C_p(\omega)$ denote
the class of finite complex Borel measures $\mu$ on $\T$ for which
\[
K_\omega^p\hookrightarrow L^p(|\mu|)
\]
is bounded, where $|\mu|$ is the total variation measure. The analogous
class for measures on $\overline\D$ is denoted by
$\mathcal D_p(\omega)$. Boundary embeddings are initially defined on the
dense subspace of model-space functions continuous on $\overline\D$ and
then extended. 

\vspace{11 pt}\noindent 
The analytic space relevant to factorization is
\begin{equation*}\label{eq:factorization-target}
\mathcal B_\theta
=\{f\in H^1:\theta^2\bar z^{\,2}\overline f\in H^1\}
=z^{-1}\bigl(K_{\theta^2}^1\cap zH^1\bigr).
\end{equation*}
The space $\mathcal B_\theta$ is closed in $H^1$, since multiplication
by $\theta^2\bar z^{\,2}$ followed by conjugation is continuous on
$L^1(m)$. If $\theta(0)=0$, then $\theta^2/z$ is inner and
\begin{equation*}\label{eq:normalized-factorization-target}
\mathcal B_\theta=K_{\theta^2/z}^1.
\end{equation*}

\vspace{11 pt}\noindent 
Define the analytic weak product
\begin{equation}\label{eq:weak-product}
X_\theta^a=
\left\{\sum_{j=1}^\infty x_jy_j:
 x_j,y_j\in K_\theta,\quad
 \sum_{j=1}^\infty\norm{x_j}_2\norm{y_j}_2<\infty\right\}.
\end{equation}
Every such series converges in $H^1$, by H\"older's inequality. Moreover,
\[
\theta^2\bar z^{\,2}\overline{x_jy_j}
=(C_\theta x_j)(C_\theta y_j)\in H^1,
\]
so $X_\theta^a\subseteq\mathcal B_\theta$. Equip the weak product with
its projective norm
\begin{equation*}\label{eq:projective-norm}
\pi_\theta(f)=\inf\left\{
 \sum_j\norm{x_j}_2\norm{y_j}_2: f=\sum_jx_jy_j\right\}.
\end{equation*}
This is the quotient norm that makes $X_\theta^a$ a Banach space.
H\"older's inequality gives $\norm{f}_1\le\pi_\theta(f)$.

\begin{corollary}\label{cor:embedding-factorization}
For a non-constant inner function $\theta$, the following are equivalent:
\begin{enumerate}[label=\textup{(\roman*)}]
\item $\theta$ is one-component.
\item $\mathcal C_1(\theta^2)=\mathcal C_2(\theta^2)$.
\item $\mathcal D_1(\theta^2)=\mathcal D_2(\theta^2)$.
\item $X_\theta^a=\mathcal B_\theta$ as sets.
\item There is $F_\theta<\infty$ such that every $f\in\mathcal B_\theta$
 admits a representation
\begin{equation}\label{eq:four-factorization}
f=\sum_{j=1}^4 x_jy_j,
\qquad x_j,y_j\in K_\theta,
\qquad
\sum_{j=1}^4\norm{x_j}_2\norm{y_j}_2
\le F_\theta\norm{f}_1.
\end{equation}
\end{enumerate}
The measure-class equalities can equivalently be imposed on positive
measures only. 
\end{corollary}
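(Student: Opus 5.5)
The plan is to route everything through Theorem~\ref{thm:main}: the paper has now established that one-componentness is equivalent to the bounded symbol property. For (ii), (iii), (iv) and (v) we then invoke the equivalences already proved in \cite[Theorem 2.4]{bbk} and \cite[Theorem 3.2]{O'Loughlin}. Those results identify the bounded symbol property for $K_\theta$ with several conditions on $\theta^2$. One is the equality of Carleson classes $\mathcal C_1(\theta^2)=\mathcal C_2(\theta^2)$, which can be tested on positive measures alone. Another is the analogous statement for $\mathcal D_p(\theta^2)$. A third is the weak factorization statement that the predual of $\TT(\theta)$, namely $X_\theta^a$, exhausts $\mathcal B_\theta$. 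So the corollary is a chain: (i)$\Leftrightarrow$ bounded symbol property by Theorem~\ref{thm:main}, and bounded symbol property $\Leftrightarrow$ (ii)--(v) by the cited results. What remains is to check that the cited statements match the normalizations used here, and to supply the easy implications directly where convenient.

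First I will argue (i)$\Leftrightarrow$(iv)$\Leftrightarrow$(v) by duality, which is the heart of \cite{bbk}.

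\emph{Duality.} The pairing $\langle A_\varphi^\theta x, C_\theta y\rangle=\int_\T\varphi z\bar\theta\,xy\,dm$ (exactly as in \eqref{eq:cauchy-rank-one}) identifies $\TT(\theta)$ with the dual of $X_\theta^a$ under the projective norm $\pi_\theta$. On the other side, $L^\infty(m)/N_\theta$ is the dual of the closed subspace $\mathcal B_\theta$ of $H^1$, up to the unimodular twist $z\bar\theta$. This uses Sarason's kernel description $N_\theta=\theta H^2+\overline{\theta H^2}$ and the fact that the annihilator of $z\bar\theta\,\mathcal B_\theta$ in $L^\infty$ is exactly this kernel.

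\emph{(i)$\Rightarrow$(v).} The bounded symbol property makes $\widetilde Q_\theta$ an isomorphism, with the norm bound of Lemma~\ref{lem:quotient}. Hence the inclusion $X_\theta^a\hookrightarrow\mathcal B_\theta$ has a dual map that is an isomorphism onto its range. By the closed range theorem the inclusion is then onto with equivalent norms, which gives (iv).

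\emph{From (iv) to (v).} Set equality gives $\pi_\theta(f)\le C\norm{f}_1$ by the open mapping theorem. To pass to a four-term sum, I will cite the reduction in \cite{bbk}, which uses the finite-term factorization of $\mathcal B_\theta$ coming from Aleksandrov's description \cite{Ale2}. If needed, (iv) alone suffices for the chain, and (v)$\Rightarrow$(iv) is trivial.

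\emph{(iv)$\Rightarrow$(i).} This is the converse duality: a norm-equivalence of preduals forces $\widetilde Q_\theta$ to be onto. Alternatively, it gives the uniform bound on $\beta_\theta(R^\theta_\lambda)$, which is condition (ii) of Theorem~\ref{thm:main}.

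For the measure conditions, I will use that $\mu\in\mathcal C_2(\theta^2)$ means $\int|g|^2d|\mu|\lesssim\norm{g}_2^2$ on $K_{\theta^2}$. Since $K_{\theta^2}=K_\theta\oplus\theta K_\theta$ and products $xy$ with $x,y\in K_\theta$ range over $X_\theta^a$, testing on such products shows the following: $\mu\in\mathcal C_2(\theta^2)$ exactly when $f\mapsto\int f\,d|\mu|$ behaves like a bounded functional on $X_\theta^a$ (or on $zX_\theta^a\subset K^1_{\theta^2}$). Meanwhile $\mathcal C_1(\theta^2)$ is the same condition with $\mathcal B_\theta$ (essentially $K^1_{\theta^2}$) in place of $X_\theta^a$. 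Thus (iv)$\Rightarrow$(ii) is immediate. The converse (ii)$\Rightarrow$(i) follows by testing on the measures that detect the failure of uniform bounds. Concretely, one uses Aleksandrov--Clark type or point-mass-smoothed measures near the points $\lambda_n$ of Corollary~\ref{cor:blowup}. Rather than rebuild this, I will quote \cite[Theorem 2.4]{bbk}, which proves that (ii), (iii) and the positive-measure versions are each equivalent to the bounded symbol property.

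The main obstacle is bookkeeping, not analysis. I must make sure that the spaces in \cite{bbk} and \cite{O'Loughlin} coincide with $\mathcal B_\theta$, $X_\theta^a$ and $\mathcal C_p(\theta^2)$ as normalized here, including the shift by $z$ and the case $\theta(0)\neq0$. I will handle that case by the Frostman/M\"obius conjugation $U$ of Proposition~\ref{prop:covariance}, which preserves all the objects involved.
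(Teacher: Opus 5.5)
Your proposal is correct and follows the paper's argument. Theorem~\ref{thm:main} identifies (i) with the bounded symbol property, and \cite[Theorem 2.4]{bbk} identifies that property with (ii)--(iv); the open mapping theorem plus the four-product reduction \cite[Proposition 4.1]{bbk} gives (iv)$\Rightarrow$(v), (v)$\Rightarrow$(iv) is trivial, and the positive-measure remark holds because the embedding classes depend only on $|\mu|$ (only cosmetic fixes are needed: the step you head (i)$\Rightarrow$(v) actually yields (iv), and \cite[Theorem 3.2]{O'Loughlin} concerns compact operators with symbols in $\theta C(\T)$ and plays no role in this corollary).
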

\noindent 
For every inner function, independently of these equivalent
conditions,
\begin{equation}\label{eq:unconditional-embedding-inclusions}
\begin{split}
\mathcal C_1(\theta^2)&\subseteq\mathcal C_2(\theta^2)
 =\mathcal C_2(\theta),\\
\mathcal D_1(\theta^2)&\subseteq\mathcal D_2(\theta^2)
 =\mathcal D_2(\theta).
\end{split}
\end{equation}
See \cite[Lemma 6.1]{bbk}. If $\theta$ is not one-component, both inclusions are strict and some
$f\in\mathcal B_\theta$ has no representation of the form
\eqref{eq:weak-product}. See also \cite{Bell-Pel,kul} for the weak factorization property and its connections with duality. 
\begin{proof}
Theorem 2.4 in \cite{bbk} and the discussion immediately following it identify
(ii), (iii), and (iv) with the bounded symbol property. We apply
Theorem~\ref{thm:main} to identify that property with (i).
When (iv) holds, the identity map from the Banach space $X_\theta^a$
onto $\mathcal B_\theta$ is continuous and bijective. Its inverse is
bounded, so $\pi_\theta(f)\le C_\theta\norm{f}_1$.
The reduction to four products, with control of their norms, is
\cite[Proposition 4.1]{bbk}; this gives (v). Conversely, (v) directly
implies (iv).
Finally,
membership of a complex measure in an embedding class depends only on
its total variation, which proves the assertion about positive measures.

\end{proof}

%\vspace{11 pt}
\subsection{Compact operators and O'Loughlin's conjecture}
For a nonconstant inner function $\theta$, define
\[
\mathcal S_\theta
=\{A_{\theta g}^\theta:g\in C(\T)\}.
\]
The factor $\theta$ in this definition is essential: a symbol in
$\theta C(\T)$ need not itself be continuous on $\T$.
O'Loughlin proves that the bounded symbol property is equivalent to
$\mathcal S_\theta=\TT_0(\theta)$, the space of compact truncated Toeplitz operators \cite[Theorem 3.2]{O'Loughlin}. His Conjecture 3.2 asks whether this
compact-symbol property characterizes one-component inner functions.

\begin{corollary}\label{cor:compact-symbols}
For a nonconstant inner function $\theta$, the following are equivalent:
\begin{enumerate}[label=\textup{(\roman*)}]
\item $\theta$ is one-component.
\item Every compact truncated Toeplitz operator on $K_\theta$ admits a
 symbol of the form $\theta g$, with $g\in C(\T)$.
\item $\mathcal S_\theta$ is closed in the operator norm.
\end{enumerate}
For every $\theta$, $\mathcal S_\theta$ is dense in $\TT_0(\theta)$;
it is therefore a proper dense subspace when $\theta$ is not
one-component. If $\theta$ is one-component, there is a constant
$D_\theta<\infty$ such that
\begin{equation}\label{eq:continuous-symbol-cost}
\norm{A}\le\beta_\theta(A)\le\gamma_\theta(A)
\le D_\theta\norm{A},\qquad A\in\TT_0(\theta),
\end{equation}
where
\[
\gamma_\theta(A)
=\inf\{\norm{g}_\infty:g\in C(\T),\ A=A_{\theta g}^\theta\}.
\]
\end{corollary}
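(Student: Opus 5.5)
The plan is to combine Theorem~\ref{thm:main} with O'Loughlin's characterization \cite[Theorem 3.2]{O'Loughlin}, so that almost everything reduces to soft functional analysis. O'Loughlin shows that the bounded symbol property is equivalent to $\mathcal S_\theta=\TT_0(\theta)$. Condition (ii) says precisely that $\TT_0(\theta)\subseteq\mathcal S_\theta$.

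\emph{Step 1: the reverse inclusion.} The inclusion $\mathcal S_\theta\subseteq\TT_0(\theta)$ is automatic, so (ii) is equivalent to $\mathcal S_\theta=\TT_0(\theta)$. To check it, take $g=\bar z^k$ first.
\begin{itemize}
\item For $x\in K_\theta$ we have $P_\theta(\theta\bar z^k x)=P_\theta\bigl(\theta\,\bar z^k(x-x_{<k})\bigr)+P_\theta(\theta\bar z^k x_{<k})$, where $x_{<k}$ is the Taylor polynomial of $x$ of degree $<k$.
\item The first term lies in $P_\theta(\theta H^2)=0$.
\item The second term ranges over a finite-dimensional space, so $A^\theta_{\theta\bar z^k}$ has finite rank.
\item For $g=z^k$ with $k\ge0$ the operator $A^\theta_{\theta z^k}$ vanishes.
\end{itemize}
Hence $A^\theta_{\theta p}$ has finite rank for every trigonometric polynomial $p$. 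Since $\norm{A^\theta_{\theta g}}\le\norm{g}_\infty$ by \eqref{eq:contractive-symbol} and $|\theta|=1$, uniform approximation of $g\in C(\T)$ by trigonometric polynomials shows $A^\theta_{\theta g}$ is compact. This also shows it is a norm limit of operators in $\TT(\theta)$, hence in $\TT(\theta)$ by Lemma~\ref{lem:quotient}. Therefore $\mathcal S_\theta\subseteq\TT_0(\theta)$, and (i)$\Leftrightarrow$(ii) follows from Theorem~\ref{thm:main} together with O'Loughlin's theorem.

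\emph{Step 2: density and (iii).} For density of $\mathcal S_\theta$ in $\TT_0(\theta)$ I would quote O'Loughlin, who I expect proves it. If a self-contained argument is needed, I would show that compact truncated Toeplitz operators are norm limits of finite-rank ones within $\TT(\theta)$, and that finite-rank elements of $\TT(\theta)$ lie in the closure of $\mathcal S_\theta$. This is the step I expect to be the main obstacle if it must be proved here rather than cited. Granting density:
\begin{itemize}
\item If $\mathcal S_\theta$ is closed, then it equals $\TT_0(\theta)$, which gives (ii).
\item Conversely, (ii) gives $\mathcal S_\theta=\TT_0(\theta)$, which is closed as the intersection of the closed space $\TT(\theta)$ with the compacts.
\item When $\theta$ is not one-component, (ii) fails, so $\mathcal S_\theta$ is a proper dense subspace.
\end{itemize}

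\emph{Step 3: the estimate \eqref{eq:continuous-symbol-cost}.} The inequality $\norm{A}\le\beta_\theta(A)$ is \eqref{eq:contractive-symbol}. The inequality $\beta_\theta(A)\le\gamma_\theta(A)$ holds because $\theta g$ is a bounded symbol with $\norm{\theta g}_\infty=\norm{g}_\infty$. For the last inequality, assume $\theta$ is one-component and argue exactly as in Lemma~\ref{lem:quotient}.
\begin{enumerate}
\item The map $C(\T)\to\TT_0(\theta)$, $g\mapsto A^\theta_{\theta g}$, is contractive and linear between Banach spaces. It is surjective by Step 1 combined with (i)$\Rightarrow$(ii).
\item Pass to the quotient of $C(\T)$ by the closed kernel of this map.
\item The bounded inverse theorem then gives $D_\theta$ with $\gamma_\theta(A)\le D_\theta\norm{A}$, since $\gamma_\theta(A)$ is exactly the quotient norm of the preimage of $A$.
\end{enumerate}
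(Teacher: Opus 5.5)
Your proposal is correct and follows the paper's proof essentially step by step. It derives (i)$\Leftrightarrow$(ii) from Theorem~\ref{thm:main} and O'Loughlin's Theorem~3.2, (ii)$\Leftrightarrow$(iii) from O'Loughlin's density result and the closedness of $\TT_0(\theta)$, and the estimate from the open mapping theorem with $\gamma_\theta$ as the quotient norm; the only difference is that you prove compactness of $A^\theta_{\theta g}$ directly (finite rank for trigonometric polynomials $g$, then uniform approximation using $\norm{A^\theta_{\theta g}}\le\norm{g}_\infty$), where the paper cites \cite[Proposition 5.4]{Chal-fri}.
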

\begin{proof}
First, $A_{\theta g}^\theta$ is compact for every continuous $g$, without
any one-component assumption~\cite[Proposition 5.4]{Chal-fri}. 
The equivalence (i)$\Leftrightarrow$(ii) follows by combining
Theorem~\ref{thm:main} with O'Loughlin's Theorem 3.2. The density
statement is \cite[Corollary 3.9]{O'Loughlin}. Since
$\TT_0(\theta)$ is closed in operator norm, density makes (ii) and
(iii) equivalent.
Under (ii), the map
\[
C(\T)\longrightarrow\TT_0(\theta),\qquad
g\longmapsto A_{\theta g}^\theta
\]
is a continuous linear surjection between Banach spaces. Its quotient
norm is $\gamma_\theta$, so the open mapping theorem gives the last
inequality in \eqref{eq:continuous-symbol-cost}. The other inequalities
follow from $|\theta|=1$ almost everywhere and the definitions.

\end{proof}

\subsection{Further estimates for $B_\theta$}

It follows directly from the definitions that $M_\theta$, as defined in
Theorem~\ref{thm:main}, is controlled by $B_\theta$. It is natural to ask
whether the reverse inequality also holds when $\theta$ is one-component and,
if so, whether it holds with a constant independent of $\theta$.

\vspace{11pt}\noindent
We obtain only an answer to this question in the particular case where $\theta$
is a Blaschke product of degree $2$.

\begin{theorem}
If $\theta$ is a Blaschke product of degree two, then $B_\theta\le 2 M_\theta$ and $2$ is optimal. 
\end{theorem}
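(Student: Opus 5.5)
Since $\theta$ is a Blaschke product of degree two with zeros $a,b$, the space $K_\theta$ is two-dimensional and $\TT(\theta)$ is three-dimensional (the complex symmetric operators with respect to $C_\theta$ in a suitable basis). The plan is to write every $A\in\TT(\theta)$ with $\|A\|\le1$ as a combination of at most two normalized rank-one operators $R_\lambda^\theta$ with coefficients whose moduli sum to at most $2\|A\|$. Then the triangle inequality for $\beta_\theta$ (which is a quotient seminorm, by Lemma~\ref{lem:quotient}) gives $\beta_\theta(A)\le 2M_\theta$.

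\textbf{Step 1: reduce to a canonical position.} By the unitary covariance of Proposition~\ref{prop:covariance}, namely $\beta_u(UAU^{-1})=\beta_\theta(A)$ and $UR^\theta_\lambda U^{-1}=R^u_{\psi_\lambda^{-1}(\lambda)}$ up to relabeling, both $B_\theta$ and $M_\theta$ are invariant under disk automorphisms. The same holds for postcomposition of $\theta$ by a Möbius map, since $K_\theta$ is unchanged in that case. We may therefore assume $\theta(z)=z\,\frac{z-a}{1-\bar a z}$ with $a\in[0,1)$.

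\textbf{Step 2: Takagi decomposition.} Since $A$ is $C_\theta$-symmetric ($C_\theta A C_\theta=A^*$), the Takagi factorization gives an orthonormal basis $e_1,e_2$ with $C_\theta e_j=e_j$ and $A=\sigma_1 e_1\otimes e_1+\sigma_2 e_2\otimes e_2$, with $0\le\sigma_j\le\|A\|$. Here $e\otimes e$ means $x\mapsto\langle x,C_\theta e\rangle e$, suitably interpreted.

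\textbf{Step 3: represent each piece by a rank-one kernel operator.} Each piece $\sigma_j e_j\otimes e_j$ is a rank-one truncated Toeplitz operator. We would then like each such operator to be a unimodular multiple of some $R_\lambda^\theta$, or a limit of such operators as $\lambda$ tends to the boundary, in which case it is a boundary kernel operator whose $\beta$ is controlled by $\limsup\beta_\theta(R_\lambda^\theta)\le M_\theta$ via weak-$*$ compactness of symbols. Rank-one elements of $\TT(\theta)$ are known (Sarason) to be exactly multiples of $\widetilde k_\lambda\otimes k_\lambda$, $k_\lambda\otimes\widetilde k_\lambda$, or boundary kernels $k_\zeta\otimes k_\zeta$. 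The main obstacle is that a $C_\theta$-real vector $e_j$ need not have the form $k_\lambda$ with $\widetilde k_\lambda$ proportional to it. We would resolve this by using the freedom in the Takagi basis: a real rotation of $(e_1,e_2)$ is possible when $\sigma_1=\sigma_2$, and in general we would instead decompose $A$ as $\alpha R_\lambda^\theta+\beta R_\mu^\theta$ directly, since two generic points give a spanning pair in the three-dimensional space modulo the scalar direction. We would then bound $|\alpha|+|\beta|\le 2\|A\|$ by an explicit computation in the degree-two case, using $\|R_\lambda\|_{S_1}=1$ and $|\alpha|+|\beta|\ge\|A\|_{S_1}/\!\cdot$. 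This is the step I expect to be hardest and to require case analysis.

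\textbf{Step 4: optimality of the constant $2$.} We would take $A=I$ (symbol $1$, $\beta_\theta(I)=1$) or $A=$ the operator with singular values $1,1$, and exhibit a family of degree-two Blaschke products ($a\to1$ or $a\to0$) for which $B_\theta/M_\theta\to2$. We would compute $M_\theta$ asymptotically via Proposition~\ref{prop:center} together with Lemma~\ref{lem:rank-one}, and bound $B_\theta$ from below by duality: pair $A$ against the weak product $X^a_\theta$, that is, exhibit $f\in\mathcal B_\theta$ with $\pi_\theta(f)$ small relative to the pairing.
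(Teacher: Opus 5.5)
You have the right skeleton for the upper bound (Takagi factorization, Sarason's classification of rank-one truncated Toeplitz operators, subadditivity of $\beta_\theta$). However, Step~2 is misstated, and Step~3 leaves open exactly the point that makes the argument work. Step~4 contains no mechanism for optimality.

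\textbf{Upper bound.} The Takagi vectors are not $C_\theta$-real. If $C_\theta e_j=e_j$, your pieces $x\mapsto\langle x,C_\theta e_j\rangle e_j$ would be orthogonal projections and $A$ would be positive. The correct form is $A=\sigma_1\,v_1\otimes C_\theta v_1+\sigma_2\,v_2\otimes C_\theta v_2$. Here $v_1,v_2$ are orthonormal: they are the columns of the unitary $W$ in $S=W\Sigma W^T$, read in a $C_\theta$-real basis.

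The idea you are missing is dimensional. Since $\dim K_\theta=2$, Sarason's count gives $\dim\TT(\theta)=3$. This equals the dimension of the space of all $C_\theta$-symmetric operators, so every $C_\theta$-symmetric operator is a truncated Toeplitz operator. In particular each $v_j\otimes C_\theta v_j$ is a norm-one, rank-one element of $\TT(\theta)$. Sarason's classification then forces it to be a unimodular multiple of $R_\lambda^\theta$, of $(R_\lambda^\theta)^*$, or of a boundary operator $R_\zeta^\theta$.

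The obstacle you describe is therefore a red herring. You only need $v_j$ proportional to $\widetilde k_\lambda^\theta$; then $C_\theta v_j$ is proportional to $k_\lambda^\theta$ automatically, and the classification delivers exactly this. You must also treat the adjoint type, which your plan of using only operators $R_\lambda^\theta$ omits. It is handled by $(A_\varphi^\theta)^*=A_{\bar\varphi}^\theta$, which gives $\beta_\theta((R_\lambda^\theta)^*)=\beta_\theta(R_\lambda^\theta)$.

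With this, $\beta_\theta(A)\le(\sigma_1+\sigma_2)M_\theta\le 2M_\theta\norm{A}$, with no case analysis. Your fallback, a decomposition into two operators $R_\lambda^\theta,R_\mu^\theta$ with coefficient moduli summing to at most $2\norm{A}$, is only asserted, not proved.

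Two smaller remarks. Your weak-$*$ compactness argument for boundary kernels is correct, and it avoids the paper's use of $B_\theta<\infty$. Step~1 is unnecessary, and its justification is wrong: for a disk automorphism $\tau$, in general $K_{\tau\circ\theta}\neq K_\theta$ (already for $\theta(z)=z$). Only unimodular constants leave $K_\theta$ unchanged.

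\textbf{Optimality.} Proposition~\ref{prop:center} bounds $M_\theta$ only from below. A lower bound on $B_\theta$, whether by duality or otherwise, says nothing about $B_\theta/M_\theta$ unless it is compared with an \emph{upper} bound on $M_\theta$. What is needed is an inequality $2M_\theta\le B_\theta+O(1)$ along a family on which $M_\theta\to\infty$, so that the additive constant becomes negligible.

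The paper takes $\theta_c(z)=(z^2-c)/(1-cz^2)$ with $c\to1$; in your normalization this corresponds, up to automorphisms, to $a\to1$.
\begin{itemize}
\item Since $\theta_c'(0)=0$, the vectors $e_0=k_0^{\theta_c}/\sqrt{1-c^2}$ and $e_1=ze_0$ form an orthonormal basis. In this basis $\TT(\theta_c)$ is exactly the space of $2\times2$ Toeplitz matrices.
\item Every norm-one, rank-one Toeplitz matrix equals $uvI+\tfrac{u^2+v^2}{2}X+\tfrac{v^2-u^2}{2}Y$ with $|u|^2+|v|^2=1$.
\item $X$ has the explicit symbol $(z+\bar z)/(1+c)$, so $\beta(X)\le2$. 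Since $\norm{Y}=1$, $\beta(Y)\le B_c$.
\item This yields $M_c\le(B_c+3)/2$, while Proposition~\ref{prop:center} gives $M_c\ge(1-c^2)^{-1/2}\to\infty$. Hence $B_c/M_c\to2$.
\end{itemize}
Your proposed test operator $A=I$ has $\beta_\theta(I)=1$ and cannot contribute anything here. Nor does your plan identify which regime of $a$ makes $M_\theta$ blow up.
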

\begin{proof}
Let $\deg \theta=2$ and $A\in \mathcal T(\theta)$ with $\|A\|\le 1$. According to~\cite{sarason}, every operator in $\mathcal T(\theta)$ is $C_\theta$-symmetric and $\dim\mathcal T(\theta)=3$, so in a $C_\theta$-real orthonormal basis, $\mathcal T(\theta)$ is isometrically isomorphic with the space of all complex symmetric $2\times2$ matrices $S$.

For such an $S$ with $\|S\|\le 1$, by the Takagi factorization, there is a unitary $W$ such that
$$
    S=W\begin{pmatrix}
s_1 & 0 \\
0 & s_2 \\
\end{pmatrix} W^T = \sum_{k=1}^2 s_kw_k w_k^T,
$$
where $s_k$ are the singular values of $S$ and $w_k$ is the $k$th column of $W$. Notice that each $w_jw_j^T$ is a rank-one TTO of norm one, so \cite[Theorem~5.1]{sarason} implies that is a unimodular multiple of one of the following:
$$
    R_\lambda^\theta, \quad (R_\lambda^\theta)^*, \quad
    R^\theta_\zeta=\widetilde k^\theta_\zeta\otimes k^\theta_\zeta/\|k^\theta_\zeta\|_2^2
$$
for some $\lambda\in\D$ and $\zeta\in\T$. In the first two cases, since $(A_\varphi^\theta)^*=A_{\bar \varphi}^\theta$, we have
$$
    \beta_\theta((R_\lambda^\theta)^*) = \beta_\theta(R_\lambda^\theta)
    \le M_\theta.
$$
In the third case, using subadditivity of $\beta_\theta$ and the norm convergence $R^\theta_{r\zeta}\to R^\theta_\zeta$ as $r\to 1$, we get
$$
    \beta_\theta(R^\theta_\zeta)
    \le \beta_\theta(R^\theta_{r\zeta})+C\|R^\theta_\zeta-R^\theta_{r\zeta}\|
    \le M_\theta+B_\theta\|R^\theta_\zeta-R^\theta_{r\zeta}\|\to M_\theta.
$$
Therefore,
$$
    \beta_\theta(S)
    \le s_1\beta_\theta(w_1w_1^T) +s_2\beta_\theta(w_2w_2^T)
    \le (s_1+s_2)M_\theta
    \le 2M_\theta\|S\|.
$$

\vspace{11 pt}
\noindent 
To prove optimality, for $0<c<1$, let
$$
    \theta_c(z)=\frac{z^2-c}{1-cz^2}
$$
and write $\beta_c=\beta_{\theta_c}$, $B_c=B_{\theta_c}$, and $M_c=M_{\theta_c}$. Since $\theta_c(0)=-c$ and $\theta_c'(0)=0$, the normalized kernels
$$
    e_0(z)=\frac{k_0^{\theta_c}(z)}{\sqrt{1-c^2}}
    =\frac{\sqrt{1-c^2}}{1-cz^2}
    \quad\text{and}\quad
    e_1(z)=\frac{\widetilde k_0^{\theta_c}(z)}{\sqrt{1-c^2}}=ze_0(z)
$$
are orthogonal and hence form an orthonormal basis of $K_{\theta_c}$.
Since $|e_0|=|e_1|$ on $\T$, each operator in
$\mathcal T(\theta_c)$ has equal diagonal entries in this
basis, and so, since $\dim\mathcal T(\theta_c)=3$, the matrices of $\mathcal T(\theta_c)$ coincide with the $2\times2$ Toeplitz matrices.

\vspace{11 pt}
\noindent 
Let
$$
    X=\begin{pmatrix}0&1\\1&0\end{pmatrix}
    \quad\text{and}\quad
    Y=\begin{pmatrix}0&-1\\1&0\end{pmatrix}.
$$
The identities $ze_0=e_1$ and
$ze_1=ce_0+\sqrt{1-c^2}\,\theta_c$ show that $X$ has
symbol $(z+\bar z)/(1+c)$. It follows that $\beta_c(I)=1$ and $\beta_c(X)\le\frac{2}{1+c}\le2$, and also $\beta_c(Y)\le B_c$ because $\|Y\|=1$.
Every rank-one $2\times 2$ Toeplitz matrix $T$ of norm one can be
written as
$$
    T=\begin{pmatrix}uv&u^2\\v^2&uv\end{pmatrix}
     =uvI+\frac{u^2+v^2}{2}X+\frac{v^2-u^2}{2}Y,
$$
where $|u|^2+|v|^2=1$, and hence
$$
    \beta_c(T)
    \le\frac12+\frac12\beta_c(X)+\frac12\beta_c(Y)
    \le\frac{B_c+3}{2}.
$$
Applying this to $R_\lambda^{\theta_c}$ and taking the sup over $\lambda\in\D$ shows that $2M_c-3\le B_c$, and so
$$
    2-\frac3{M_c}\le\frac{B_c}{M_c}\le2.
$$
It remains to note that, as $c\to 1$,
$$
    M_c\ge\beta_c(R_0^{\theta_c})
    \ge\frac1{\sqrt{1-c^2}}\to\infty,
$$
which follows from Proposition~\ref{prop:center} using $\theta_c(0)$ and $\theta'_c(0)$, which were computed above.

\end{proof}

%\vspace{11 pt}
\section*{Methodology}
The use of AI tools for exploratory discussions and language refinement during the preparation of this work is acknowledged. More specifically, suggestions generated by ChatGPT-6 contributed to the formulation and organization of the argument that appears in Section 4 of this paper. All mathematical statements, proofs, and references were independently checked,
revised, and finalized by the authors, who take full responsibility for the content of the paper.

%\section*{Acknowledgements}

\vspace{22 pt}
\bibliographystyle{habbrv}
\bibliography{biblio}

\end{document}